\documentclass[12pt,a4paper]{article}
\usepackage{authblk}
\usepackage{amsmath, amssymb, amsfonts, amsthm, array}
\usepackage{graphicx, caption, subcaption, float}

\newtheorem{theorem}{Theorem}[section]

\newtheorem{coro}[theorem]{Corollary}

\setlength\parskip{8pt}

\theoremstyle{definition}

\newtheorem{example}[theorem]{Example}

\title{ \textbf{The characteristic polynomial of a graph containing loops}}

\author[1]{Deepa Sinha\thanks{deepasinha@sau.ac.in}}
\author[1]{Bableen Kaur\thanks{bableenkaur@students.sau.ac.in}}
\author[2]{Thomas Zaslavsky\thanks{zaslav@math.binghamton.edu}}

\affil[1]{Department of Mathematics, South Asian University, Akbar Bhawan, Chanakyapuri,  New Delhi 110021, India}
\affil[2]{Department of Mathematical Sciences, Binghamton University (SUNY), Binghamton, New York, U.S.A. 13902-6000}

\date{\today}

\begin{document}
	\maketitle
	
	\begin{center}{\textbf{Abstract}}\end{center}
	
	\noindent In this article, we focus on the characteristic polynomial of a graph containing loops, but without multiple edges. We present a relationship between the characteristic polynomial of a graph with loops and the graph obtained by removing all the loops. In turn, we compute the characteristic polynomial of unitary addition Cayley graphs.
	
	\noindent \textbf{$\textbf{2010}$ Mathematics Subject Classifications}: 05C50.
	
	\noindent \textbf{Keywords}: graph with loops, characteristic polynomial.
	
	\newpage
	
	\section{Introduction}
	
	Let $G$ be a graph with vertex set $\{v_{1}, v_{2}, \dots, v_{p}\}$. The characteristic polynomial $\phi(G;x)$(or simply $\phi(G)$) of a graph $G$ is the characteristic polynomial of its adjacency matrix. The adjacency matrix $A(G)=[a_{ij}]$ of a graph $G$ is a square matrix of order $p\times p$ such that the $(i,j)$ entry of $A(G)$ is $1$ if vertices $v_{i}$ and $v_{j}$ are adjacent and $0$ otherwise. The spectrum of an adjacency matrix is a list of its eigenvalues along with their multiplicities. The spectrum of a graph is a spectrum of its adjacency matrix.
	
	\noindent The spectrum and the characteristic polynomial of a graph frequently appear in  mathematical sciences, chemistry, and physics. As to graph theorists, the characteristic polynomial tells information about the structural properties of a graph. On the other hand, in chemistry, the characteristic equation is related to a secular equation formed from the chemical formula of organic molecules, the so-called unsaturated conjugated hydrocarbons. It is useful in predicting the relative stabilities of conjugated hydrocarbons.
	
	\noindent A lone pair is another concept that is informative for a chemist. Lewis introduced it, and it forms the basis for an electronic theory of chemical bonds. The traditional representation of a lone pair by Lewis and Langmuir \cite{lew, lan} involved a pair of dots located near an atom symbol. However, from the topological viewpoint, this is a rather poor image. There is no general convention on how lone pairs may be expressed in classical 2D models, therefore their presence is always ignored. Only in the Gillespie--Nyholm approach to molecular geometry \cite{gil, gilh, giln}, they studied the lone pair (a non bonding domain) as an object equivalent to a bonding domain involving an arrangement of both domain types around an atom.
	
	\noindent Lone pairs describe the formation of donor-acceptor bonds as reflected in the concept of Lewis acids and bases \cite{jen}. Consider the chemical equation, NH$_{3}$ + BH$_{3}$ = NH$_{3}$BH$_{3}$ involving the base ammonia, which has a lone pair, and the acid borane, which has a vacancy (the lack of electron pairs to form the stable octet configuration of a noble gas). The vacancy is hardly represented in molecular graphs on surfaces, although it is related to the depletion of the charge density. Reactions of this sort incorporate the same logical modeling paradox as does the recombination of two free radicals. A molecule with a well-defined graph and well-defined 2D surface is formed from ill-defined model structures. Hence, it is still an open question of how to interpret the lone pair in molecular graphs.
	
	\noindent The precise term molecular graph is ill-defined. For instance, the chemists frequently used these graphs to represent atoms and bonds in a chemical reaction \cite{bal,bon}. One may consider only heavy atoms (as in the so-called hydrogen-suppressed graphs) or the bonds representing only s-frameworks (graphs for p-systems).  A  single vertex may also represent a functional group. These graphs (and even molecular multigraphs) are incomplete in the sense of the original Lewis dot formula that consists of all atoms and all valence electrons (represented by dots). Perhaps the best image of a Lewis formula is the molecular pseudograph, a multigraph with loops representing lone pairs. Only this graph represents all valence electron pairs by edges (including non-bonding lone pairs) and all atoms.
	
	\noindent A clear model consisting of molecular pseudographs appeared in 1973 by Dugundji and Ugi \cite{dugu}. They represent a molecule by a connection table (BE-matrix) that matches the adjacency matrix for a pseudograph with the number of valence electrons for each atom on the main diagonal, which is necessary for a correct count of degree of vertices. The loops appear naturally while reconstructing a graph. Molecular pseudographs appeared in different fields of mathematical chemistry \cite{babh}. However, they are rarely used. Probably one of the reasons is that chemists frequently draw ``lobes" of p-orbitals near the atoms in molecular graphs, and the loops may be confused with p-orbitals. 
	
	\noindent So chemical terms and concepts (that may have an imprecise definition in classical molecular models) need to be translated into the language of pseudographs, because the pseudograph coincides with the Lewis formula. Furthermore, every abstract pseudograph corresponds (if at all) to only a specific finite set of molecular pseudographs. 
	
	\noindent In Section $2$, we discuss how a basic figure containing loops contributes to the coefficients of a characteristic polynomial of a graph. In Section $3$, there are two  main theorems along with other results; in one theorem the characteristic polynomial of a graph with loops is expressed in terms of the characteristic polynomials of its simple subgraphs, and the second theorem helps in computing the characteristic polynomial of a graph without loops from the characteristic polynomial of the same graph containing loops. Subsequently, in Section $4$, we show that the characteristic polynomial of a unitary addition Cayley graph can be computed with the help of an anti-circulant graph.


	\section{Contribution of a loop to the basic figure}

	From the characteristic polynomial of a simple graph, the value of the coefficients was discovered independently by Sachs \cite{sac} and Spialter \cite{spi} as given in the following theorem.  A basic figure $\mathcal{B}$ of a simple graph $G$ is a subgraph of $G$ whose each component is either a cycle or an edge. 
	
	\begin{theorem}\label{thm}
		The characteristic polynomial of a simple graph $G$ is given by the following:  
		\begin{equation*} 
		\phi(G) = \sum_{\mathcal{B} \in \textbf{B}(G)} (-1)^{k(\mathcal{B})} 2^{c(\mathcal{B})}x^{p-|V(\mathcal{B})|},
		\end{equation*}
	where $\textbf{B}(G)$ denotes the set of basic figures of a graph $G$, and $k(\mathcal{B})$ and $c(\mathcal{B})$ denote the number of components and cycles in a basic figure $\mathcal{B}$, respectively.
    \end{theorem}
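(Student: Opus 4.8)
The plan is to work directly from the definition $\phi(G) = \det(xI - A(G))$ and to extract the coefficient of each power of $x$ through the classical expansion of a characteristic polynomial in terms of principal minors. Recall that the coefficient of $x^{p-i}$ in $\det(xI-A)$ equals $(-1)^i$ times the sum of all $i \times i$ principal minors of $A$, and that each such minor is $\det(A[S])$, the determinant of the adjacency matrix of the subgraph induced on a vertex subset $S$ with $|S| = i$. So the whole problem reduces to evaluating $\det(A[S])$ combinatorially for every $S$ and then reassembling the coefficients.

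First I would expand $\det(A[S])$ by the Leibniz permutation formula $\sum_{\sigma} \operatorname{sgn}(\sigma) \prod_{v \in S} a_{v,\sigma(v)}$ and ask which permutations $\sigma$ of $S$ contribute a nonzero term. A term survives only when $v$ is adjacent to $\sigma(v)$ for every $v \in S$. Because $G$ is simple we have $a_{vv} = 0$, so $\sigma$ can have no fixed points; a transposition $(u\,v)$ forces the edge $uv$; and a permutation-cycle of length $\ell \ge 3$ forces a cycle of that length in $G$. Thus the surviving permutations are exactly those whose cycle partition of $S$ realizes a basic figure $\mathcal{B}$ spanning $S$: each $2$-cycle is an edge of $\mathcal{B}$ and each longer permutation-cycle is a graph cycle of $\mathcal{B}$.

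Next I would count the contributions. Each edge component of $\mathcal{B}$ arises from the unique transposition on its two vertices, whereas each graph cycle of length $\ge 3$ can be traversed in two directions and so corresponds to exactly two permutation-cycles; hence a fixed basic figure with $c(\mathcal{B})$ cycles is realized by exactly $2^{c(\mathcal{B})}$ permutations. All of these have the same sign: a single cycle of length $\ell$ contributes sign $(-1)^{\ell-1}$, so $\operatorname{sgn}(\sigma) = (-1)^{|S| - k(\mathcal{B})}$, independent of the chosen orientations. Consequently $\det(A[S]) = \sum_{\mathcal{B}\text{ spans }S} (-1)^{|S|-k(\mathcal{B})} \, 2^{c(\mathcal{B})}$. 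Substituting this into the principal-minor formula and simplifying $(-1)^i (-1)^{i-k(\mathcal{B})} = (-1)^{k(\mathcal{B})}$ collapses everything into a single sum over all basic figures, yielding the claimed identity once the sums over $i$ and over the subsets $S$ with $|S|=i$ are merged into one sum over $\mathbf{B}(G)$.

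The routine parts are the Leibniz expansion and the determinant-to-minors identity; the step demanding the most care is the bookkeeping in the third paragraph, namely verifying that the orientation freedom of each graph cycle (and only of the cycles, not of the edges) produces precisely the factor $2^{c(\mathcal{B})}$, and that the sign $(-1)^{|S|-k(\mathcal{B})}$ is genuinely constant across all permutations realizing the same $\mathcal{B}$. Matching these against the leading $(-1)^i$ to recover the advertised sign $(-1)^{k(\mathcal{B})}$ is where an off-by-one error would most easily creep in.
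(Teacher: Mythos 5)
Your proposal is correct. Note that the paper itself gives no proof of this theorem: it is quoted as a known result of Sachs and Spialter, with the proof left to the cited literature. Your argument is the standard one for this result --- expand $\det(xI-A)$ into principal minors, evaluate each $\det(A[S])$ by the Leibniz formula, observe that the surviving permutations are exactly those whose cycle structure realizes a basic figure spanning $S$, and track the sign $(-1)^{|S|-k(\mathcal{B})}$ and the orientation factor $2^{c(\mathcal{B})}$ --- and your bookkeeping is right: the combination $(-1)^{i}(-1)^{i-k(\mathcal{B})}=(-1)^{k(\mathcal{B})}$ gives exactly the advertised coefficient. The only point worth making explicit is the $i=0$ term: the empty principal minor equals $1$ and accounts for the leading term $x^{p}$, which under the paper's convention corresponds to the empty basic figure belonging to $\textbf{B}(G)$.
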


	\noindent We deal with the case where the graph may contain loops. The diagonal entry $(i,i)$ of an adjacency matrix of this graph is $1$ if there is a loop at the vertex $v_{i}$, and $0$ otherwise. In chemistry, such graphs are used to represent heteroconjugated molecules. In \cite{tri}, the author shows that Sachs's formula (Theorem \ref{thm}) for computing the characteristic polynomial of a simple graph can be extended as it is to compute the characteristic polynomial of a pseudograph (without multiple edges) associated with heteroconjugated molecules. The generalization involves the difference in the set of basic figures only. In this case, a basic figure is a subgraph whose each component is either a cycle (having $3$ or more vertices) or a loop or an edge. Figure \ref{G} and Table \ref{bfig} show an example of a graph $G$ with a loop and its basic figures. A basic figure without a loop of this graph is nothing but a basic figure of a graph without a loop. 
	\begin{figure}[h]
		\centering
		\includegraphics[width=1.2in,height=1.2in]{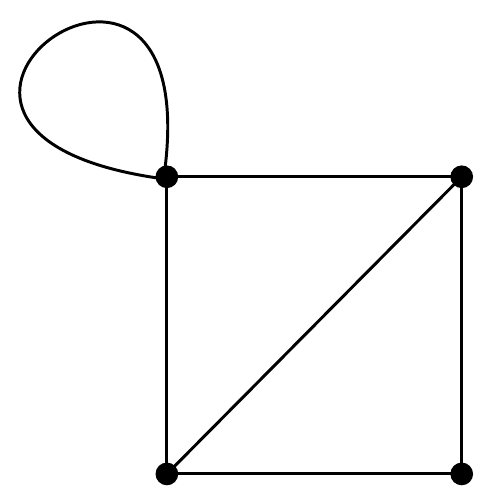}
		\caption{A graph $G$ with a loop.}
		\label{G}
	\end{figure}
    \begin{table}[htp]
    	\centering
    	\begin{tabular}{m{0.5cm}|m{1.2cm}|m{1.2cm}|m{1.2cm}|m{1.2cm}|m{1.2cm}}
    		
    		 \centering\textbf{$n$} & \multicolumn{5}{c}{Basic figures of order $n$} \\ \hline
    		 
    		 \centering\textbf{$1$} & \includegraphics[width=0.5in,height=0.5in]{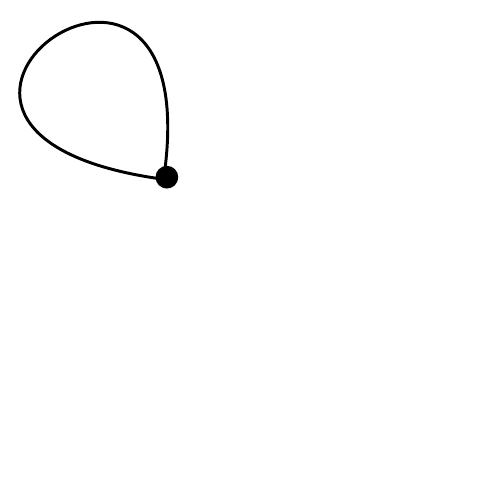} \\ \hline
    		
    		 \centering\textbf{$2$} & \includegraphics[width=0.5in,height=0.5in]{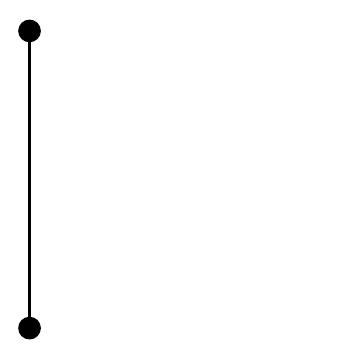} & \includegraphics[width=0.5in,height=0.5in]{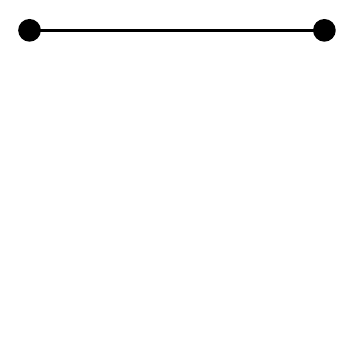} & \includegraphics[width=0.5in,height=0.5in]{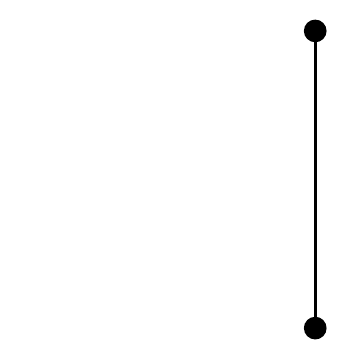} & \includegraphics[width=0.5in,height=0.5in]{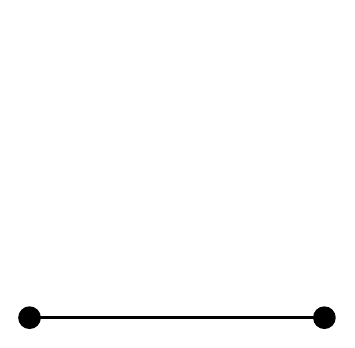} & \includegraphics[width=0.5in,height=0.5in]{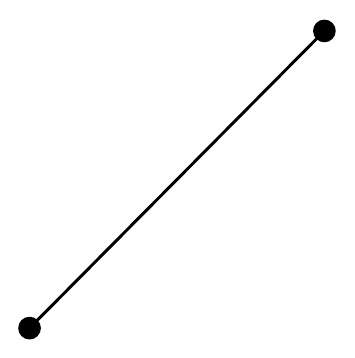} \\ \hline
    	
    		 \centering\textbf{$3$} & \includegraphics[width=0.5in,height=0.5in]{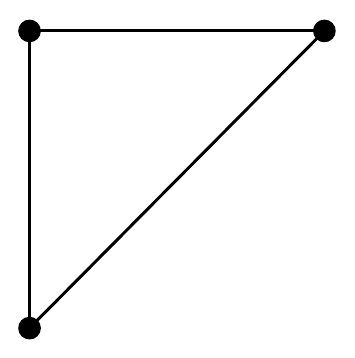} & \includegraphics[width=0.5in,height=0.5in]{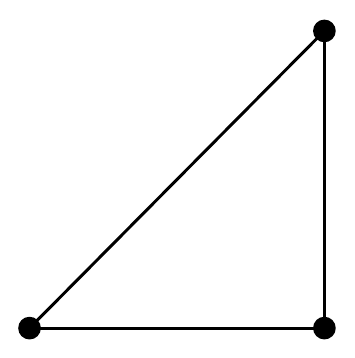} & \includegraphics[width=0.5in,height=0.5in]{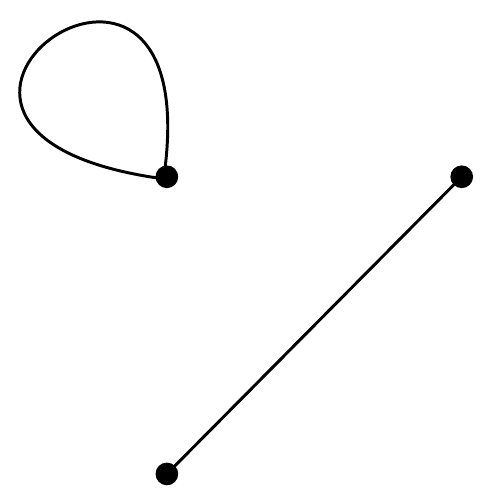} & \includegraphics[width=0.5in,height=0.5in]{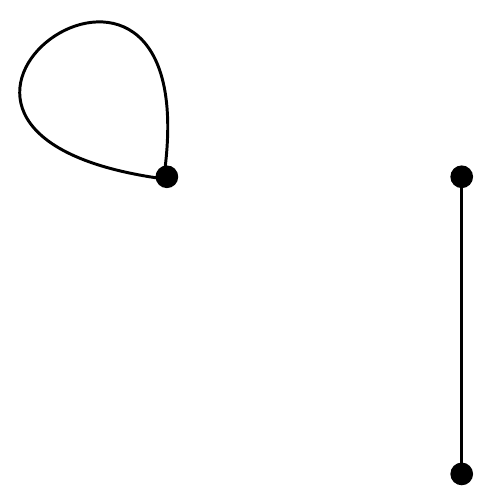} & \includegraphics[width=0.5in,height=0.5in]{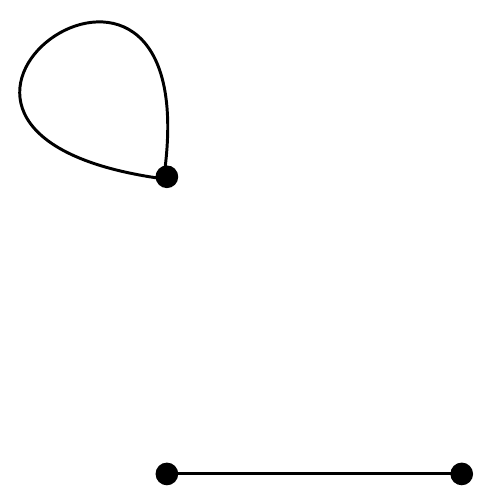} \\ \hline
    		
    		 \centering\textbf{$4$} & \includegraphics[width=0.5in,height=0.5in]{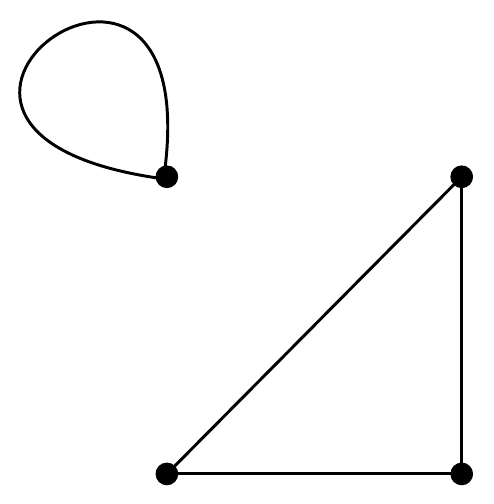} & \includegraphics[width=0.5in,height=0.5in]{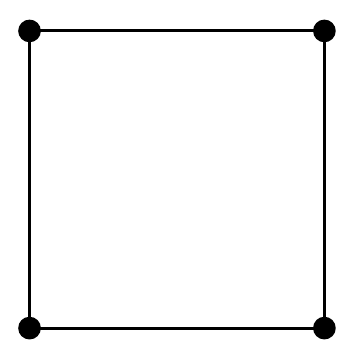} & \includegraphics[width=0.5in,height=0.5in]{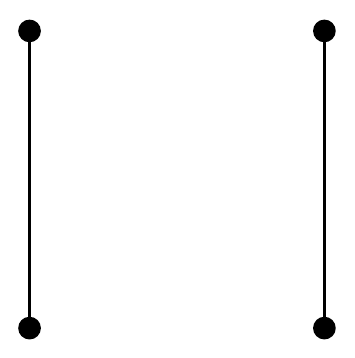} & \includegraphics[width=0.5in,height=0.5in]{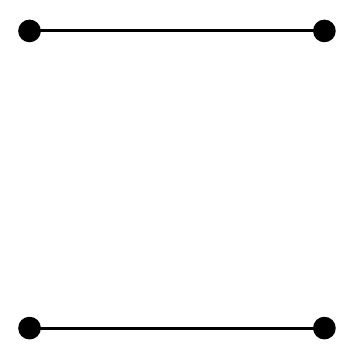}	\\
    		
    	\end{tabular}
    	\caption{Basic figures of different order of a graph $G$.}
    	\label{bfig}
    \end{table} 

	\noindent Let us look at the basic figures containing loops of a general graph $G$ consisting of $p$ vertices, $q$ edges, and $m$ loops (we do not count loops while counting edges). Define $\mathcal{L}$ to be the subset of the vertex set of $G$ consisting of those vertices which have a loop; and let $N(v)$ and $N[v]$ denote the open and closed neighborhoods, respectively, of a vertex $v$ with respect to $G$.
	
	\noindent A basic figure of order $1$ containing loops is just a single vertex with a loop. So such basic figures contribute $-m$ to the coefficient of $x^{p-1}$. Similarly, a basic figure of order $2$ that has loops consists of two loops. Such basic figures contribute ${m\choose 2}$ to the coefficient of $x^{p-2}$. A basic figure of order $3$ containing loops in any graph is either of the forms given in Figure \ref{order3}. The basic figures of the type shown in Figure \ref{order3}(a) contribute $-$${m\choose 3}$ to the coefficient of $x^{p-3}$, whereas the basic figures in Figure \ref{order3}(b) contribute
	\begin{equation*}
		mq - \sum_{v\in \mathcal{L}} |N(v)|.
	\end{equation*}
	This is because, for any vertex $v$ with a loop, we have to pick the edge of the basic figure from the edges which are not incident to $v$. That is equal to subtracting the total number of neighbors of $v$ from the total number of edges in $G$ except for loops.
	\begin{figure}[htb]
		\centering
		\begin{subfigure}{.5\textwidth}
			\centering
			\includegraphics[width=1.2in,height=1in]{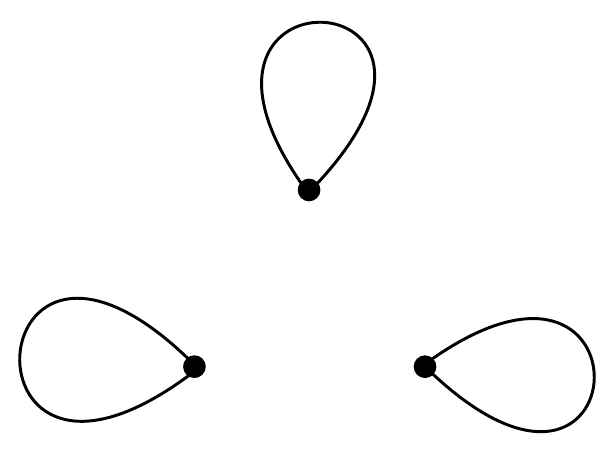}
			\caption{}
		\end{subfigure}%
		\begin{subfigure}{.5\textwidth}
			\centering
			\includegraphics[width=1in,height=0.8in]{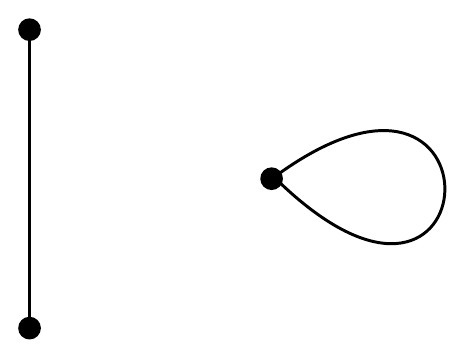}
			\caption{}
		\end{subfigure}
		\caption{Possible basic figures of order $3$ containing loop in any graph.}
		\label{order3}
	\end{figure}	
	
	\noindent For a basic figure of order $4$, we have three possibilities, as shown in Figure \ref{order4}. The basic figures of the type given in Figure \ref{order4}(a) contribute ${m \choose 4}$ to the coefficient of $x^{p-4}$,  whereas the basic figures in Figure \ref{order4}(b) contribute
	\begin{align*}
		&-\bigg[{m\choose 2} q - \sum_{\substack{v_{1},v_{2}\in \mathcal{L}\colon\\ v_{1}v_{2} \in E(G)}} \big(|N(v_{1})| + |N(v_{2})|-1\big) - \sum_{\substack{v_{1},v_{2}\in \mathcal{L}\colon\\ v_{1}v_{2} \notin E(G)}}\big(|N(v_{1})| + |N(v_{2})|\big)\bigg] \\
		&= (m-1) \sum_{v\in \mathcal{L}} |N(v)|-{m\choose 2} q - |E(G[\mathcal{L}])|,
	\end{align*}
	where $\mathcal{L}$ and $N(v)$ are defined above, $E(G)$ is the set of edges of the graph $G$, and $G[\mathcal{L}]$ is the subgraph of $G$ induced by $\mathcal{L}$. Here, first, we have to choose two vertices $v_{1}, v_{2} \in \mathcal{L}$ and then we have to select those edges which are neither incident to $v_{1}$ nor to $v_{2}$. If $v_{1}$ and $v_{2}$ are not adjacent, then we need to subtract the total number of neighbors of $v_{1}$ and $v_{2}$ from the total number of edges in $G$ except for loops. Otherwise, we need to subtract an extra $1$ (as in the second term) as the edge $v_{1}v_{2}$ is counted twice while counting in terms of neighborhood. Next, the basic figures of the type shown in Figure \ref{order4}(c) contribute
	\begin{equation*}
    	\sum_{v\in \mathcal{L}} \big|\mathcal{C}_{3}^{\{v\}}\big|, 
	\end{equation*}
	where $\mathcal{C}_{n}^{\{v\}}$ denotes the set of cycles of length $n$ not containing $v$.
	\begin{figure}[htb]
		\centering
		\begin{subfigure}{.5\textwidth}
			\centering
			\includegraphics[width=1.2in,height=1in]{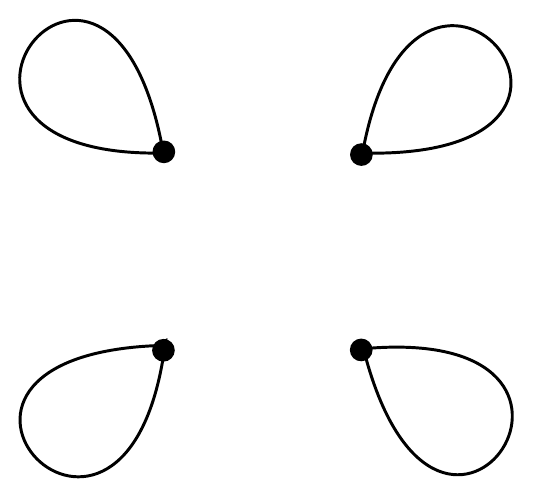}
			\caption{}
		\end{subfigure}%
		\begin{subfigure}{.5\textwidth}
			\centering
			\includegraphics[width=1in,height=1in]{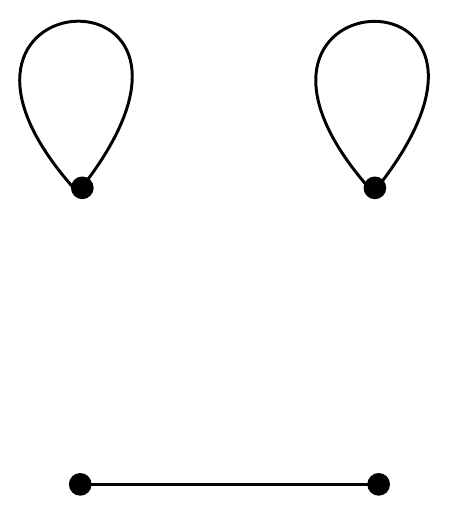}
			\caption{}
		\end{subfigure}\\
	\begin{subfigure}{.5\textwidth}
		\centering
		\includegraphics[width=1in,height=1in]{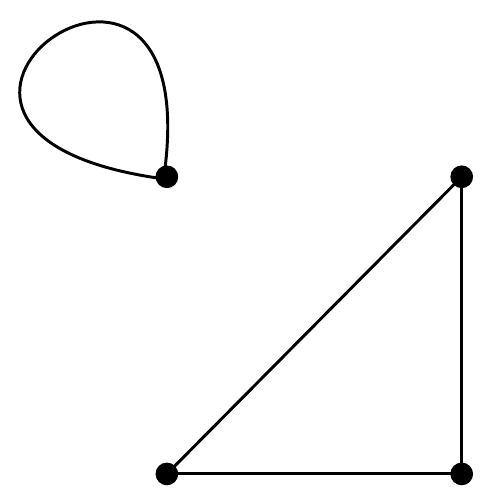}
		\caption{}
	\end{subfigure}
		\caption{Possible basic figures of order $4$ containing loop in any graph.}
		\label{order4}
	\end{figure}

    \noindent This analysis can be further generalized to the basic figures of higher orders. For $k$ such that $1\leq k \leq m$, the basic figures of order $k$ consisting of $k$ loops contribute $(-1)^{k}{m \choose k}$ to the coefficient of $x^{p-k}$. The basic figures of order $k$ consisting of $k-2$ loops and an edge contribute
    \begin{align*}
    	&(-1)^{k-1}\bigg[{m \choose k-2}q - \sum_{\mathcal{L}_{k-2} \subseteq \mathcal{L}} \Big(\sum_{v_{i}\in \mathcal{L}_{k-2}}\big(|N(v_{i})|\big) - |E(G[\mathcal{L}_{k-2}])|\Big)\bigg] \\
	&=(-1)^{k-1}\bigg[{m \choose k-2}q + \binom{m-2}{k-4} |E(G[\mathcal{L}])| - \binom{m-1}{k-3} \sum_{v\in \mathcal{L}} |N(v)|\bigg],
    \end{align*}
	where the first summation runs over all ($k-2$)-element subsets of $\mathcal{L}$. Next, the basic figures of order $k$ consisting of $k-n$ loops and a cycle $C_{n}$ of length $n$ contribute
	\begin{equation*}
		(-1)^{k-n+1}\sum_{\mathcal{L}_{k-n} \subseteq \mathcal{L}} \big|\mathcal{C}_{n}^{\mathcal{L}_{k-n}}\big|,
	\end{equation*}
	where the summation runs over all ($k-n$)-element subsets of $\mathcal{L}$ and $\mathcal{C}_{n}^{\mathcal{L}_{k-n}}$ denotes the set of cycles of length $n$ that do not contains elements of the set $\mathcal{L}_{k-n}$.


	\section{Main Theorems}
	
	 Consider a graph $G$ containing $m$ loops. For $k=m$, the following theorem acts as a bridge between the characteristic polynomials of $G$ and its simple subgraphs.
		
	\begin{theorem}\label{loops+}
		If\/ $l_{1},l_{2}, \dots, l_{k}$ represent loops at vertices $v_{1}, v_{2}, \dots, v_{k}$, respectively, in a graph $G$, then the relationship between the characteristic polynomials of $G$ and $G' = G-\{l_{1},l_{2}, \dots, l_{k}\}$ is given by the following:
		\begin{equation*}
		\phi(G) = \sum_{X\subseteq \mathcal{L}}(-1)^{|X|}\phi(G'-X),
		\end{equation*}
		where $\mathcal{L}=\{v_{1}, v_{2}, \dots, v_{k}\}$.
	\end{theorem}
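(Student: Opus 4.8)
The plan is to work directly with the adjacency matrix and exploit the multilinearity of the determinant, rather than re-deriving everything from the basic-figure expansion. The starting point is the observation that inserting loops changes only the diagonal of the adjacency matrix: if $D$ denotes the $0$--$1$ diagonal matrix whose $(i,i)$ entry is $1$ exactly when $v_i \in \mathcal{L}$, then $A(G) = A(G') + D$. Consequently, writing $M := xI - A(G')$ (so that $\det M = \phi(G')$), we have $\phi(G) = \det\!\big(xI - A(G)\big) = \det(M - D)$.

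Next I would expand $\det(M-D)$ by multilinearity in the columns. Since $D = \sum_{v_j \in \mathcal{L}} e_j e_j^{\top}$, the matrix $M - D$ agrees with $M$ except that, for each $j$ with $v_j \in \mathcal{L}$, its $j$-th column is $M_{\cdot j} - e_j$. Splitting each such column and using that the determinant is multilinear in the columns yields
\begin{equation*}
\phi(G) = \det(M - D) = \sum_{X \subseteq \mathcal{L}} (-1)^{|X|}\, \det\big(M^{(X)}\big),
\end{equation*}
where $M^{(X)}$ is obtained from $M$ by replacing the $j$-th column with $e_j$ for every $v_j \in X$ and leaving all other columns unchanged.

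The heart of the argument is then to identify $\det(M^{(X)})$. For each $j$ with $v_j \in X$, the $j$-th column of $M^{(X)}$ is $e_j$, whose unique nonzero entry sits on the diagonal; performing a Laplace (cofactor) expansion along these columns one at a time shows that $\det(M^{(X)})$ equals the determinant of the matrix obtained from $M$ by deleting all rows and columns indexed by $X$. Because each selected pivot lies on the main diagonal, every cofactor sign is $+1$, so no extra sign is introduced. That deleted submatrix is exactly $xI - A(G'-X)$, whose determinant is $\phi(G'-X)$; substituting gives the claimed identity.

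The one genuinely delicate point---and the step I would write out most carefully---is the sign bookkeeping in the Laplace expansion: one must confirm that deleting the rows and columns indexed by $X$ simultaneously produces no residual sign beyond the $(-1)^{|X|}$ already extracted from the column splitting. This is cleanest to verify by induction on $|X|$, peeling off one diagonal pivot at a time. An alternative, fully combinatorial proof runs instead through the generalized Sachs formula of Section~\ref{thm}'s extension: each basic figure of $G$ factors uniquely into a set of loop components on some $X \subseteq \mathcal{L}$ together with a loop-free basic figure of $G'-X$, and since a loop adds $1$ to the component count but contributes nothing to the cycle count, regrouping the expansion by $X$ reproduces the same formula; there the obstacle is instead checking that the surviving edge-and-cycle figures are enumerated precisely by $\textbf{B}(G'-X)$.
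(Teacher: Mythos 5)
Your proof is correct, but it is not the paper's proof: the paper argues combinatorially from the loop-extended Sachs expansion, whereas you argue linear-algebraically from multilinearity of the determinant. Concretely, the paper groups the basic figures $\mathcal{B}$ of $G$ according to the set $X\subseteq\mathcal{L}$ of deleted loops that $\mathcal{B}$ uses, observes that stripping those loop components leaves a figure $\mathcal{B}'$ of $G'-X$ with $k(\mathcal{B})=k(\mathcal{B}')+|X|$, $c(\mathcal{B})=c(\mathcal{B}')$, $|V(\mathcal{B})|=|V(\mathcal{B}')|+|X|$, and resums over $X$ --- exactly the ``fully combinatorial'' alternative you sketch in your closing paragraph. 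The obstacle you flag there (that the figures using loops precisely at $X$ correspond bijectively to $\textbf{B}(G'-X)$) is indeed the crux of that route; the paper treats it loosely, defining $\mathcal{L}(\mathcal{B})$ as the set of vertices of $\mathcal{L}$ \emph{lying in} $\mathcal{B}$, when the computation requires the set of vertices of $\mathcal{L}$ whose \emph{loop is a component} of $\mathcal{B}$ (a vertex of $\mathcal{L}$ may instead occur in $\mathcal{B}$ on an edge or cycle). Your determinant argument sidesteps that bookkeeping entirely, and your one worry --- residual signs in the Laplace expansion --- is groundless for the reason you give: in $M^{(X)}$ every column indexed by $X$ equals $e_j$ with its $1$ on the diagonal, so any permutation contributing to $\det(M^{(X)})$ must fix $X$ pointwise, giving $\det(M^{(X)})=\det M[\bar{X},\bar{X}]=\phi(G'-X)$ with no extra sign (here $M[\bar{X},\bar{X}]$ is the principal submatrix on the rows and columns outside $X$). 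Beyond robustness, your route buys two things: it is self-contained (the paper's proof rests on the generalization of Theorem \ref{thm} to graphs with loops, imported from the literature), and it generalizes verbatim to weighted loops: for any diagonal matrix $D$,
\begin{equation*}
\det(M-D)=\sum_{X\subseteq\mathcal{L}}\Big(\prod_{j\in X}(-d_{jj})\Big)\det M[\bar{X},\bar{X}],
\end{equation*}
which is the natural statement for the heteroatom-weighted graphs motivating the paper. What the paper's route buys in exchange is continuity with the rest of the article: the correspondence $\mathcal{B}\leftrightarrow(X,\mathcal{B}')$ constructed in its proof is reused immediately in the subsequent theorem to express $\phi(G)$ and its coefficients as explicit sums over $\textbf{B}(G')$, machinery your proof would have to build separately.
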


        \noindent Note that the theorem does not assume $G$ has loops only at the vertices in $\mathcal{L}$.

    \begin{proof}
    	Let $\mathcal{B}\in \textbf{B}(G)$ be a basic figure of $G$ and let $\mathcal{L}(\mathcal{B})$ be the subset of $\mathcal{L}$ consisting of the vertices of $\mathcal{L}$ that are in $\mathcal{B}$. Let $\mathcal{B}'$ be $\mathcal{B}- \mathcal{L}(\mathcal{B})$ viewed as a basic figure of $G'-\mathcal{L}(\mathcal{B})$. Hence, we have $V(\mathcal{B}')= V(\mathcal{B}) -\mathcal{L}(\mathcal{B})$, $c(\mathcal{B}')=c(\mathcal{B})$, and $k(\mathcal{B}')= k(\mathcal{B})- |\mathcal{L}(\mathcal{B})|$. This implies
    	\begin{align*}
    		|V(\mathcal{B}')| &= |V(\mathcal{B}) -\mathcal{L}(\mathcal{B})|\\
    		&= |V(\mathcal{B})| - |V(\mathcal{B}) \cap\mathcal{L}(\mathcal{B})|\\
    		&= |V(\mathcal{B})| - |\mathcal{L}(\mathcal{B})|
    	\end{align*}
    	and
    	\begin{equation*}
    		(-1)^{k(\mathcal{B})}2^{c(\mathcal{B})}= (-1)^{|\mathcal{L}(\mathcal{B})|}(-1)^{k(\mathcal{B}')}2^{c(\mathcal{B}')}.
    	\end{equation*}
    	The proof of the theorem is now a computation,
    	\begin{align*}
    		\phi(G)&= \sum_{\mathcal{B} \in \textbf{B}(G)} (-1)^{k(\mathcal{B})} 2^{c(\mathcal{B})}x^{p-|V(\mathcal{B})|}\\
    		&= \sum_{X\subseteq \mathcal{L}} \sum_{\substack{\mathcal{B}\in \textbf{B}(G)\colon\\ \mathcal{L}(\mathcal{B})=X}} (-1)^{|\mathcal{L}(\mathcal{B})|}(-1)^{k(\mathcal{B}')}2^{c(\mathcal{B}')} x^{p-|V(\mathcal{B}')|-|\mathcal{L}(\mathcal{B})|}\\
    		&= \sum_{X\subseteq \mathcal{L}}(-1)^{|X|} \sum_{\mathcal{B}'\in \textbf{B}(G'- X)} (-1)^{k(\mathcal{B}')}2^{c(\mathcal{B}')} x^{p-|V(\mathcal{B}')|-|X|}\\
    		&= \sum_{X\subseteq \mathcal{L}}(-1)^{|X|} \phi(G'-X).\qedhere
    	\end{align*}
    \end{proof}

        \noindent The result in Theorem \ref{loops+} is useful when the characteristic polynomial of a graph without loops is known, and we wish to compute the characteristic polynomial of the same graph containing loops at some or all vertices.  

    \begin{coro}\label{G-loop}
    	If\/ $l$ represents a loop at vertex $v$ in a graph $G$, then the characteristic polynomial of $G$ satisfies
    	\begin{equation*}
    	\phi(G) = \phi(G-l) - \phi(G-v).
    	\end{equation*}
   \end{coro}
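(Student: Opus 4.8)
The plan is to specialize Theorem \ref{loops+} to the case of a single loop, that is, to take $k=1$ and $\mathcal{L}=\{v\}$. With this choice the graph $G'=G-\{l_1,\dots,l_k\}$ of the theorem becomes $G'=G-l$, the graph obtained from $G$ by deleting the one loop at $v$ (while keeping the vertex $v$), and the index set $\mathcal{L}$ has exactly two subsets, namely $X=\emptyset$ and $X=\{v\}$.

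Next I would expand the sum $\sum_{X\subseteq\mathcal{L}}(-1)^{|X|}\phi(G'-X)$ over these two subsets. The term $X=\emptyset$ contributes $(-1)^{0}\phi(G'-\emptyset)=\phi(G-l)$, and the term $X=\{v\}$ contributes $(-1)^{1}\phi(G'-\{v\})=-\phi\big((G-l)-v\big)$. Adding the two gives $\phi(G)=\phi(G-l)-\phi\big((G-l)-v\big)$.

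The only point requiring a word of justification is the identification $(G-l)-v=G-v$. Deleting the vertex $v$ removes every edge and every loop incident to $v$ in any case, so whether or not the loop $l$ was deleted beforehand makes no difference to the resulting graph. Hence $\phi\big((G-l)-v\big)=\phi(G-v)$, and substituting this into the previous line yields $\phi(G)=\phi(G-l)-\phi(G-v)$, as desired.

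I expect no serious obstacle here, since the corollary is a direct specialization of Theorem \ref{loops+}; the only subtlety, and it is a minor one, is to keep straight the distinction between deleting a loop (which retains the vertex) and deleting the vertex (which automatically discards the loop together with all incident edges).
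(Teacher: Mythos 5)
Your proposal is correct and matches the paper's intended derivation: the paper states Corollary \ref{G-loop} without proof precisely because it is the $k=1$, $\mathcal{L}=\{v\}$ specialization of Theorem \ref{loops+}, which is exactly what you carry out. Your extra remark that $(G-l)-v = G-v$ is the right (and only) point needing justification, and it is handled correctly.
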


    \begin{coro}
    	In the characteristic polynomial of $G$, the coefficient $a_{i}(G)$ of $x^{p-i}$ is
    	\begin{equation*}
    		a_{i}(G) = \sum_{X \subseteq \mathcal{L}\colon |X|\leq i} (-1)^{|X|} a_{i-|X|}(G'-X).
    	\end{equation*}
    \end{coro}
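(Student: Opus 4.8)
The plan is to read off the coefficient of a fixed power of $x$ on both sides of the identity in Theorem~\ref{loops+}. I would begin by expanding each characteristic polynomial appearing there in powers of $x$ with the coefficient notation already in use: write $\phi(G) = \sum_{i=0}^{p} a_{i}(G)\, x^{p-i}$, and, for each $X \subseteq \mathcal{L}$, write $\phi(G'-X) = \sum_{j=0}^{p-|X|} a_{j}(G'-X)\, x^{(p-|X|)-j}$. The key observation to record at this stage is that $G'-X$ has $p-|X|$ vertices, so its characteristic polynomial has degree $p-|X|$, and the coefficient $a_{j}(G'-X)$ is attached to the power $x^{(p-|X|)-j}$ rather than to $x^{p-j}$.

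Next I would substitute both expansions into $\phi(G) = \sum_{X\subseteq\mathcal{L}}(-1)^{|X|}\phi(G'-X)$ and compare the coefficients of $x^{p-i}$. On the right-hand side, the term of $\phi(G'-X)$ that carries $x^{p-i}$ is the one with $(p-|X|)-j = p-i$, that is, $j = i-|X|$. Since the index $j$ ranges over $0 \le j \le p-|X|$, the lower bound $j \ge 0$ forces $|X| \le i$, which is precisely the summation constraint stated in the corollary. Collecting the surviving contributions yields $a_{i}(G) = \sum_{X\subseteq\mathcal{L}\colon |X|\le i}(-1)^{|X|} a_{i-|X|}(G'-X)$, as required.

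I do not expect any substantive obstacle beyond careful bookkeeping of the degree shift. Because deleting the $|X|$ looped vertices in $X$ lowers the order of the graph by $|X|$, the power of $x$ attached to a given coefficient of $\phi(G'-X)$ is shifted relative to $\phi(G)$, and the only point that demands attention is translating ``coefficient of $x^{p-i}$ in $\phi(G)$'' into ``coefficient $a_{i-|X|}$ of $\phi(G'-X)$'' and tracking when that index remains nonnegative. Once this shift is handled, the corollary follows directly by equating coefficients in Theorem~\ref{loops+}.
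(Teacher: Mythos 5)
Your proposal is correct and follows essentially the same route as the paper's own proof: expand each $\phi(G'-X)$ with the degree shift coming from the $|X|$ deleted vertices, reindex via $j = i - |X|$ (which is exactly where the constraint $|X| \le i$ arises), and extract the coefficient of $x^{p-i}$. No gaps; this is the intended bookkeeping argument.
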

    \begin{proof}
    	From Theorem \ref{loops+}, we have
    	\begin{align*}
    		\phi(G) &= \sum_{X\subseteq \mathcal{L}}(-1)^{|X|}\phi(G'-X)\\
    		&= \sum_{X \subseteq \mathcal{L}} (-1)^{|X|} \sum_{j=0}^{p-|X|} a_{j}(G'-X) x^{p-|X|-j}
    		\intertext{and now replacing the sum over $j$ by a sum over $i$, where $i = j+|X|$,}
    		&= \sum_{X \subseteq \mathcal{L}} \sum_{i=|X|}^{p} x^{p-i} (-1)^{|X|} a_{i-|X|}(G'-X)
    		\intertext{and by reversing the order of summation, we get}
    		&= \sum_{i=0}^{p} x^{p-i} \sum_{\substack{X \subseteq \mathcal{L}\colon\\|X|\leq i}} (-1)^{|X|} a_{i-|X|}(G'-X).
    	\end{align*}
    	The corollary follows by extracting the coefficient of $x^{p-i}$.
    \end{proof}

    \noindent In Theorem \ref{loops+}, a basic figure may appear implicitly several times on the right-hand side.  Here are formulas that express quantities in terms of sums explicitly over basic figures.
    
    \begin{theorem}
    	The characteristic polynomial of $G$ satisfies
    	\begin{equation*}
    		\phi(G) =  \sum_{\mathcal{B}' \in \textbf{B}(G')} (-1)^{k(\mathcal{B}')} 2^{c(\mathcal{B}')} x^{p-|V(\mathcal{B}') \cup \mathcal{L}|} ({x-1})^{|\mathcal{L} - V(\mathcal{B}')|}
    	\end{equation*}
    	and the coefficient of $x^{p-i}$ is
    	\begin{equation*}
    		a_{i}(G) = \sum_{\substack{\mathcal{B}' \in \textbf{B}(G')\colon\\|V(\mathcal{B}')| \leq i \leq |V(\mathcal{B}')\cup \mathcal{L}|}}  (-1)^{k(\mathcal{B}') - |V(\mathcal{B}')| + i} 2^{c(\mathcal{B}')} \binom{|\mathcal{L} - V(\mathcal{B}')|}{|V(\mathcal{B}') \cup \mathcal{L}| - i}.
    	\end{equation*}
    	In particular,
    	\begin{equation*}
    		\det A(G) = (-1)^p a_{p}(G) = \sum_{\substack{\mathcal{B}' \in \textbf{B}(G')\colon\\V(\mathcal{B}')\supseteq V(G)-\mathcal{L}}}  (-1)^{k(\mathcal{B}') - |V(\mathcal{B}')|} 2^{c(\mathcal{B}')}.
    	\end{equation*}
    \end{theorem}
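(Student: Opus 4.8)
The plan is to combine the subset-expansion of Theorem~\ref{loops+} with Sachs's formula (Theorem~\ref{thm}) and then reorganize the resulting double sum so that it is indexed by the basic figures of the loopless graph $G'$ rather than by pairs $(X,\mathcal{B}')$. First I would apply Theorem~\ref{loops+} and then invoke Theorem~\ref{thm} for each graph $G'-X$ (which is loopless and has $p-|X|$ vertices), giving
\begin{equation*}
\phi(G) = \sum_{X\subseteq \mathcal{L}}(-1)^{|X|}\sum_{\mathcal{B}'\in\textbf{B}(G'-X)}(-1)^{k(\mathcal{B}')}2^{c(\mathcal{B}')}x^{(p-|X|)-|V(\mathcal{B}')|}.
\end{equation*}

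The key structural observation is that a basic figure of $G'-X$ is exactly a basic figure $\mathcal{B}'$ of $G'$ with $V(\mathcal{B}')\cap X=\emptyset$; equivalently, for a fixed $\mathcal{B}'\in\textbf{B}(G')$ the sets $X$ for which $\mathcal{B}'$ survives in $G'-X$ are precisely the subsets $X\subseteq \mathcal{L}-V(\mathcal{B}')$. Interchanging the order of summation then yields
\begin{equation*}
\phi(G)=\sum_{\mathcal{B}'\in\textbf{B}(G')}(-1)^{k(\mathcal{B}')}2^{c(\mathcal{B}')}\sum_{X\subseteq \mathcal{L}-V(\mathcal{B}')}(-1)^{|X|}x^{p-|X|-|V(\mathcal{B}')|}.
\end{equation*}
Writing $r=|\mathcal{L}-V(\mathcal{B}')|$ and grouping the inner sum by $j=|X|$ rewrites it as $x^{p-|V(\mathcal{B}')|-r}\sum_{j=0}^{r}\binom{r}{j}(-1)^{j}x^{r-j}=x^{p-|V(\mathcal{B}')|-r}(x-1)^{r}$. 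Since $V(\mathcal{B}')$ and $\mathcal{L}-V(\mathcal{B}')$ are disjoint, $|V(\mathcal{B}')|+r=|V(\mathcal{B}')\cup\mathcal{L}|$, so the surviving power of $x$ is $x^{p-|V(\mathcal{B}')\cup\mathcal{L}|}$, which is exactly the first displayed formula.

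For the coefficient formula I would expand $(x-1)^{r}=\sum_{t=0}^{r}\binom{r}{t}(-1)^{r-t}x^{t}$ inside each summand, so that the power of $x$ contributed is $x^{p-|V(\mathcal{B}')\cup\mathcal{L}|+t}$. Extracting $x^{p-i}$ forces $t=|V(\mathcal{B}')\cup\mathcal{L}|-i$, which lies in $\{0,\dots,r\}$ exactly when $|V(\mathcal{B}')|\le i\le|V(\mathcal{B}')\cup\mathcal{L}|$; tracking the sign $(-1)^{k(\mathcal{B}')}(-1)^{r-t}=(-1)^{k(\mathcal{B}')-|V(\mathcal{B}')|+i}$ and the binomial $\binom{r}{t}=\binom{|\mathcal{L}-V(\mathcal{B}')|}{|V(\mathcal{B}')\cup\mathcal{L}|-i}$ produces the stated expression for $a_i(G)$. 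I would then specialize $i=p$: since $|V(\mathcal{B}')\cup\mathcal{L}|\le p$ always holds, the range constraint collapses to $|V(\mathcal{B}')\cup\mathcal{L}|=p$, that is $V(\mathcal{B}')\supseteq V(G)-\mathcal{L}$, the binomial becomes $\binom{\cdot}{0}=1$, and multiplying by $(-1)^p$ via $\det A(G)=(-1)^p a_p(G)$ absorbs the $+i=+p$ in the exponent to leave $(-1)^{k(\mathcal{B}')-|V(\mathcal{B}')|}$.

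I expect the only genuine subtlety to be the reindexing step: one must check that the correspondence between basic figures of $G'-X$ and basic figures of $G'$ avoiding $X$ is a true bijection preserving both $k$ and $c$, so that interchanging the summation order neither omits nor double-counts any figure, and that the alternating inner sum collapses cleanly to $x^{p-|V(\mathcal{B}')\cup\mathcal{L}|}(x-1)^{r}$. Everything else is routine binomial bookkeeping and sign tracking.
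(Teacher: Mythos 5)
Your proposal is correct and follows essentially the same route as the paper's proof: apply Theorem~\ref{loops+}, expand each $\phi(G'-X)$ by Sachs's formula, interchange the two sums using the correspondence between basic figures of $G'-X$ and basic figures of $G'$ avoiding $X$, collapse the inner alternating sum to $x^{p-|V(\mathcal{B}')\cup\mathcal{L}|}(x-1)^{|\mathcal{L}-V(\mathcal{B}')|}$, and then extract coefficients binomially, specializing $i=p$ for the determinant. The only cosmetic difference is that the paper writes the inner sum as $\sum_{X}(-1/x)^{|X|}=\bigl(\frac{x-1}{x}\bigr)^{|\mathcal{L}-V(\mathcal{B}')|}$ rather than grouping by $|X|=j$, which is the same computation.
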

    \begin{proof}
    	The formulae are obtained by expanding the result in Theorem \ref{loops+}. We have
    	\begin{align*}
    		\phi(G) &= \sum_{X\subseteq \mathcal{L}}(-1)^{|X|}\phi(G'-X)\\
    		&=\sum_{X\subseteq \mathcal{L}}(-1)^{|X|} \sum_{\mathcal{B}'\in \textbf{B}(G'- X)} (-1)^{k(\mathcal{B}')}2^{c(\mathcal{B}')} x^{p-|V(\mathcal{B}')|-|X|}
    		\intertext{and by interchanging the order of summation, we get}
    		&= \sum_{\mathcal{B}' \in \textbf{B}(G')} (-1)^{k(\mathcal{B}')} 2^{c(\mathcal{B}')} x^{p-|V(\mathcal{B}')|} \sum_{X \subseteq \mathcal{L} - V(\mathcal{B}')} \Big(-\frac{1}{x}\Big)^{|X|} \\
    		&= \sum_{\mathcal{B}' \in \textbf{B}(G')} (-1)^{k(\mathcal{B}')} 2^{c(\mathcal{B}')} x^{p-|V(\mathcal{B}')|} \Big(\frac{x-1}{x}\Big)^{|\mathcal{L} - V(\mathcal{B}')|} \\
    		&= \sum_{\mathcal{B}' \in \textbf{B}(G')} (-1)^{k(\mathcal{B}')} 2^{c(\mathcal{B}')} x^{p-|V(\mathcal{B}')|-|\mathcal{L} - V(\mathcal{B}')|} ({x-1})^{|\mathcal{L} - V(\mathcal{B}')|}
    		\intertext{and since $|V(\mathcal{B}')|+|\mathcal{L} - V(\mathcal{B}')| = |V(\mathcal{B}') \cup \mathcal{L}|$, we get}
    		&= \sum_{\mathcal{B}' \in \textbf{B}(G')} (-1)^{k(\mathcal{B}')} 2^{c(\mathcal{B}')} x^{p-|V(\mathcal{B}') \cup \mathcal{L}|} ({x-1})^{|\mathcal{L} - V(\mathcal{B}')|}\\
    		&= \sum_{\mathcal{B}' \in \textbf{B}(G')} (-1)^{k(\mathcal{B}')} 2^{c(\mathcal{B}')} x^{p-|V(\mathcal{B}') \cup \mathcal{L}|} \sum_{j=0}^{|\mathcal{L} - V(\mathcal{B}')|} x^{j} (-1)^{|\mathcal{L} - V(\mathcal{B}')|-j} \binom{|\mathcal{L} - V(\mathcal{B}')|}{j} \\
    		&= \sum_{\mathcal{B}' \in \textbf{B}(G')} \sum_{j=0}^{|\mathcal{L} - V(\mathcal{B}')|} (-1)^{k(\mathcal{B}')+|\mathcal{L} - V(\mathcal{B}')|-j} 2^{c(\mathcal{B}')} x^{p-|V(\mathcal{B}') \cup \mathcal{L}| + j} \binom{|\mathcal{L} - V(\mathcal{B}')|}{j}
    		\intertext{and by replacing the sum over $j$ by a sum over $i$, where $i = |V(\mathcal{B}') \cup \mathcal{L}| - j$, we get}
    		&= \sum_{\mathcal{B}' \in \textbf{B}(G')} \sum_{i=|V(\mathcal{B}')|}^{|V(\mathcal{B}')\cup \mathcal{L}|} (-1)^{k(\mathcal{B}')+|\mathcal{L} - V(\mathcal{B}')| - |V(\mathcal{B}') \cup \mathcal{L}| + i} 2^{c(\mathcal{B}')} x^{p-i} \binom{|\mathcal{L} - V(\mathcal{B}')|}{|V(\mathcal{B}') \cup \mathcal{L}| - i} \\
    		&= \sum_{\mathcal{B}' \in \textbf{B}(G')} \sum_{i=|V(\mathcal{B}')|}^{|V(\mathcal{B}')\cup \mathcal{L}|} (-1)^{k(\mathcal{B}') - |V(\mathcal{B}')| + i} 2^{c(\mathcal{B}')} x^{p-i} \binom{|\mathcal{L} - V(\mathcal{B}')|}{|V(\mathcal{B}') \cup \mathcal{L}| - i}.
    	\end{align*}
    	Finally, by reversing the order of summation again, we have $i$ such that $0 \leq i \leq p$ and the basic figure $\mathcal{B}'$ must satisfy $|V(\mathcal{B}')| \leq i \leq |V(\mathcal{B}')\cup \mathcal{L}|.$  It implies
    	\begin{align*}
    	\phi(G) &= \sum_{i=0}^{p} x^{p-i}  \sum_{\substack{\mathcal{B}' \in \textbf{B}(G')\colon\\|V(\mathcal{B}')| \leq i \leq |V(\mathcal{B}')\cup \mathcal{L}|}}  (-1)^{k(\mathcal{B}') - |V(\mathcal{B}')| + i} 2^{c(\mathcal{B}')} \binom{|\mathcal{L} - V(\mathcal{B}')|}{|V(\mathcal{B}') \cup \mathcal{L}| - i},
    	\end{align*}
    	where the inner sum is the coefficient of $x^{p-i}$.
    	
    	\noindent For $i=p$, the inner summation simplifies to 
    	\begin{equation*}
    		a_{p} = (-1)^{p}\sum_{\substack{\mathcal{B}' \in \textbf{B}(G')\\V(\mathcal{B}')\supseteq V(G)-\mathcal{L}}}  (-1)^{k(\mathcal{B}') - |V(\mathcal{B}')|} 2^{c(\mathcal{B}')}
    	\end{equation*}
    	and the result follows.
    \end{proof}
    
    \noindent The following theorem helps in computing the characteristic polynomial of a graph containing loops. For a vertex $v$ and a set $X$ of vertices, let $\mathcal{C}(v)^{X}$ denote the set of cycles containing $v$ that do not contain elements of $X$.
    
	\begin{theorem}\label{vertices}
	Let $v_{1}, v_{2}, \dots, v_{k}$ be some of the vertices of a graph $G$. The relationship between the characteristic polynomials of $G$ and $G-\{v_{1},v_{2}, \dots, v_{k}\}$ is given by the following:
	\begin{multline*}
      	\phi(G) = x^{k}\phi(G-X_{k}) - \sum_{i=0}^{k-1}	x^{i}\Big[\sum_{v \in N[v_{i+1}]-X_{i}} \phi(G-X_{i+1}-v)\\
	    + 2 \sum_{C\in \mathcal{C}(v_{i+1})^{X_{i}}} \phi\big(G-X_{i}-V(C)\big)\Big],
	\end{multline*}
	where $X_{0}=\{\}$ and for $i\geq 1$, $X_{i}=\{v_{1}, v_{2}, \dots, v_{i}\}$.
    \end{theorem}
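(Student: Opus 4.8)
The plan is to reduce the multi-vertex formula to an iterated application of a single-vertex expansion, so the first task is to establish a Schwenk-type identity for deleting one vertex that carries a loop. Concretely, I would prove that if a vertex $v$ of $G$ has a loop, then
\[
\phi(G) = x\,\phi(G-v) - \sum_{w\in N[v]} \phi(G-v-w) - 2\sum_{C\ni v}\phi(G-V(C)),
\]
where the last sum runs over the cycles (of length at least $3$) through $v$. The use of the \emph{closed} neighborhood $N[v]$ is exactly what makes the loop register: the $w=v$ summand is $\phi(G-v-v)=\phi(G-v)$, so it contributes the loop term, whereas for a loopless vertex one would use the open neighborhood $N(v)$ instead. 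This is consistent with the theorem's $N[\,\cdot\,]$ and signals that the hypothesis ``each deleted $v_{i+1}$ carries a loop'' is what is really being used.

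To prove this single-vertex identity I would invoke the basic-figure (Sachs) expansion of Theorem \ref{thm}, as extended to loops in Section 2, and partition the basic figures $\mathcal{B}\in\textbf{B}(G)$ according to how $\mathcal{B}$ meets $v$. There are four cases: $\mathcal{B}$ does not cover $v$; $\mathcal{B}$ uses the loop at $v$ as a component; $\mathcal{B}$ uses an edge $vw$ with $w\in N(v)$; or $\mathcal{B}$ uses a cycle through $v$. Removing the component at $v$ turns $\mathcal{B}$ into a basic figure of $G-v$, of $G-v$ again (the loop covers the single vertex $v$), of $G-v-w$, or of $G-V(C)$, respectively. Tracking the change in $|V(\mathcal{B})|$ (the power of $x$), in $k(\mathcal{B})$ (the sign), and in $c(\mathcal{B})$ (the factor $2$ for a cycle) shows each case yields exactly one of the four terms above, the loop case producing precisely the $w=v$ summand of the closed-neighborhood sum.

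With the single-vertex identity in hand, I would iterate it. Writing $G_i = G-X_i$ (so $G_0=G$ and $G_k=G-X_k$), I apply the identity to $G_i$ at the vertex $v_{i+1}$, which still carries its loop in $G_i$ since only the other vertices $v_1,\dots,v_i$ have been removed. Since $G_i-v_{i+1}=G_{i+1}$, the closed neighborhood of $v_{i+1}$ in $G_i$ is $N[v_{i+1}]-X_i$, and the cycles through $v_{i+1}$ in $G_i$ are precisely the elements of $\mathcal{C}(v_{i+1})^{X_i}$, the identity becomes
\[
\phi(G_i) = x\,\phi(G_{i+1}) - B_i, \qquad B_i = \sum_{v\in N[v_{i+1}]-X_i}\phi(G-X_{i+1}-v) + 2\sum_{C\in\mathcal{C}(v_{i+1})^{X_i}}\phi(G-X_i-V(C)).
\]
A straightforward induction on $k$ (or direct back-substitution) then unrolls the recursion $\phi(G_i)=x\,\phi(G_{i+1})-B_i$ into $\phi(G_0) = x^{k}\phi(G_k) - \sum_{i=0}^{k-1} x^{i} B_i$, which is the asserted formula.

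I expect the main obstacle to be the bookkeeping in the single-vertex identity, specifically isolating the loop contribution and confirming that it is exactly the $w=v$ term folded into $N[v]$; this is the step where the loop hypothesis is indispensable, and it is easy to slip and state the open-neighborhood version. The telescoping is routine once $B_i$ is matched against the expansion of $G_i$ at $v_{i+1}$, the only care needed being the identifications $G_i-v_{i+1}=G_{i+1}$, $N_{G_i}[v_{i+1}]=N[v_{i+1}]-X_i$, and $\{\,C\ni v_{i+1}\text{ in }G_i\,\}=\mathcal{C}(v_{i+1})^{X_i}$, each of which holds because deleting $X_i$ removes no vertex of $v_{i+1}$ nor its loop.
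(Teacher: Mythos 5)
Your proposal is correct, and it runs on the same combinatorial engine as the paper's proof --- the Sachs basic-figure expansion, with figures classified by the type of component (loop, edge, or cycle at the distinguished vertex) --- but it organizes the argument genuinely differently. The paper proves the full $k$-vertex formula in one pass: it partitions all of $\textbf{B}(G)$ according to the first $v_j$ that occurs in a basic figure (Case 1: $v_1 \in \mathcal{B}$; Case 2: $v_1 \notin \mathcal{B}$, $v_2 \in \mathcal{B}$; and so on, the figures avoiding all of $X_k$ producing the $x^k\phi(G-X_k)$ term), handles the edge/cycle/loop subcases inside each case, and closes with an informal ``the process continues'' step; the single-vertex statement is then deduced as a corollary of the theorem. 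You invert that logical order: you first prove the $k=1$ identity $\phi(G)=x\,\phi(G-v)-\sum_{w\in N[v]}\phi(G-v-w)-2\sum_{C\ni v}\phi(G-V(C))$ for a looped vertex $v$, and then obtain the theorem by iterating the recursion $\phi(G_i)=x\,\phi(G_{i+1})-B_i$ and unrolling it. Your route is more modular and replaces the paper's hand-waved general step with an actual induction, with the factor $x^i$ emerging mechanically from back-substitution rather than from tracking which $v_j$ are absent from a figure; what the paper's route buys is a single explicit correspondence on basic figures from which every term on the right-hand side can be read off simultaneously.

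A further point in your favor: your explicit flag that each deleted vertex must carry a loop is exactly right, and the paper is silent about it. With the standard reading of the closed neighborhood, the stated formula fails for a loopless vertex: for $G=K_2$ and $k=1$ it gives $x^2-x-1$ instead of $x^2-1$, because the $v=v_1$ summand $\phi(G-X_1-v_1)=\phi(G-v_1)$ has no basic figures to account for it --- that summand is precisely the image of the figures using the loop at $v_1$, which is also how the paper's own subcase (c) produces it. So the theorem must be read either with the hypothesis you state, or with $N[v]$ interpreted so that $v\in N[v]$ exactly when $v$ has a loop (the reading that also makes the paper's subsequent corollary agree with Schwenk's formula for simple graphs). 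Under either reading your proof is complete.
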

    \begin{proof}
    	Let $\mathcal{B}\in \textbf{B}(G)$ be a basic figure of $G$. We give a one-to-one correspondence between the basic figures of $G$ and those contributing to one of the terms on the right. Consider the following cases:
    	
    	\noindent \emph{Case 1. If $v_{1} \in \mathcal{B}$.}
    	
    	\noindent (a) If $v_{1}\in K_{2} \subseteq \mathcal{B}$, let $\mathcal{B}'$ be $\mathcal{B}-V(K_{2})$ viewed as a basic figure of $G-V(K_{2})$. We have $k(\mathcal{B}')=k(\mathcal{B})-1$. It implies $(-1)^{k(\mathcal{B}')} 2^{c(\mathcal{B}')} = -(-1)^{k(\mathcal{B})} 2^{c(\mathcal{B})}$. Hence, $\mathcal{B}'$ contributes $-(-1)^{k(\mathcal{B})} 2^{c(\mathcal{B})}$ to $\phi(G-v-v_{1})$, where $v\in K_{2}$. In turn, $\mathcal{B}'$ contributes the same amount in $-\phi(G-v-v_{1})$ as does $\mathcal{B}$ in $\phi(G)$. This holds for all $v\in N(v_{1})$.
    	
    	\noindent (b) If $v_{1}\in C \subseteq \mathcal{B}$, let $\mathcal{B}'$ be $\mathcal{B}- V(C)$ viewed as a basic figure of $G-V(C)$. Here, $k(\mathcal{B}')=k(\mathcal{B})-1$ and $c(\mathcal{B}')=c(\mathcal{B})-1$. It implies $(-1)^{k(\mathcal{B}')} 2^{c(\mathcal{B}')} = (-1)^{k(\mathcal{B})-1} 2^{c(\mathcal{B})-1}=\frac{-1}{2}(-1)^{k(\mathcal{B})} 2^{c(\mathcal{B})}$. Hence, $\mathcal{B}'$ contributes $\frac{-1}{2}(-1)^{k(\mathcal{B})} 2^{c(\mathcal{B})}$ to $\phi(G-V(C))$. In turn, $\mathcal{B}'$ contributes the same amount in $-2\phi(G-V(C))$ as does $\mathcal{B}$ in $\phi(G)$. This holds for every cycle containing $v_{1}$.
    	
    	\noindent (c) If $v_{1}\in \mathcal{B}$ with a loop at it, let $\mathcal{B}$ be $\mathcal{B}-\{v_{1}\}$ viewed as a basic figure of $G-v_{1}$. We have $k(\mathcal{B}')=k(\mathcal{B})-1$. Analogously to the first subcase, $\mathcal{B}'$ contributes the same amount in $-\phi(G-v_{1})$ as does $\mathcal{B}$ in $\phi(G)$.
    	
    	\noindent Combining the above subcases, $\mathcal{B}'$ contributes in $\phi(G-v-v_{1})$ and $\phi(G-V(C))$ for all $v\in N[v_{1}]$ and for every cycle containing $v_{1}$.
    	
    	\noindent \emph{Case 2. If $v_{1} \notin \mathcal{B}$ and $v_{2} \in \mathcal{B}$.}
    	
    	\noindent (a) If $v_{2}\in K_{2} \subseteq \mathcal{B}$, let $\mathcal{B}'$ be $\mathcal{B}- V(K_{2})$ viewed as a basic figure of $G-v_{1}-V(K_{2})$. Here, $\mathcal{B}'$ contributes $-(-1)^{k(\mathcal{B})} 2^{c(\mathcal{B})}$ in $x\phi(G-v-v_{1}-v_{2})$, where $v\in K_{2}$. If $X_{2}= \{v_{1}, v_{2}\}$, then $\mathcal{B}'$ contributes in $-x\phi(G-X_{2}-v)$ for all $v \in N(v_{2})-v_{1}$.
    		
        \noindent (b) If $v_{2}\in C \subseteq \mathcal{B}$, let $\mathcal{B}'$ be $\mathcal{B}- V(C)$ viewed as a basic figure of $G-v_{1}-V(C)$. Here, $\mathcal{B}'$ contributes $\frac{-1}{2}(-1)^{k(\mathcal{B})} 2^{c(\mathcal{B})}$ to $x\phi(G-v_{1}-V(C))$. In turn, $\mathcal{B}'$ contributes in $-2x\phi(G-X_{1}-V(C))$, where $X_{1}=\{v_{1}\}$.
        
        \noindent (c) If $v_{2}\in \mathcal{B}$ with a loop at it, let $\mathcal{B}$ be $\mathcal{B}-\{v_{2}\}$ viewed as a basic figure of $G-\{v_{1},v_{2}\}$. In this case, $\mathcal{B}'$ contributes in $-x\phi(G-X_{2})$.
        
        \noindent Here, $\mathcal{B}'$ contributes in $\phi(G-X_{2}-v)$ and $\phi(G-X_{1}-V(C))$ for all $v\in N[v_{2}]-X_{1}$ and for every cycle $C\in\mathcal{C}(v_{2})^{X_{1}}$.
        
        \noindent Proceeding likewise, the next case would be where the vertices $v_{1}$ and $v_{2}$ do not belong to $\mathcal{B}$ but $v_{3}\in \mathcal{B}$; a similar analysis can be done. Define $X_{i}=\{v_{1}, v_{2}, \dots, v_{i}\}$ for all $i$ such that $3\leq i\leq k$. The process continues till none of the vertices $v_{1}, v_{2}, \dots, v_{k}$ is present in $\mathcal{B}$. Let $\mathcal{B}'$ be the same basic figure viewed as a subgraph of $G-X_{k}$. Here, $\mathcal{B}'$ contributes the same amount in $x^{k}\phi(G- X_{k})$ as does $\mathcal{B}$ in $\phi(G)$. The result follows by combining all cases.       
    \end{proof}
    
    \noindent If we want to remove one vertex, the following corollary coincides with a result in \cite{sch}.
    
    \begin{coro}
    	Let $v$ be a vertex of a graph $G$ and let $\mathcal{C}(v)$ denote the set of cycles containing $v$. The characteristic polynomial $\phi(G)$ satisfies
    	\begin{equation*}
    		\phi(G) = x\phi(G-X_{1}) - \sum_{u \in N[v]} \phi(G-X_{1}-u) - 2\sum_{C\in\mathcal{C}(v)} \phi(G-V(C)), 
    	\end{equation*}
    	where $X_{1}= \{v_{1}\}$.
    \end{coro}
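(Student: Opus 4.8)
The plan is to derive this identity as the single-vertex specialization of Theorem \ref{vertices}, obtained by setting $k = 1$ and identifying the fixed vertex $v$ with $v_1$. Since the preceding remark asserts that the result ``coincides with a result in \cite{sch},'' the essential content is already contained in Theorem \ref{vertices}, so no fresh combinatorial bijection over basic figures is required; the work is purely that of collapsing the general formula to its base case.

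First I would substitute $k = 1$ into the multiline formula of Theorem \ref{vertices}. The outer summation $\sum_{i=0}^{k-1}$ then reduces to the lone term $i = 0$, where by definition $X_0 = \emptyset$ and the prefactor is $x^{0} = 1$. Next I would evaluate the index sets in that single term. Because $X_0 = \emptyset$, the neighborhood condition $N[v_{i+1}] - X_i$ becomes $N[v_1] = N[v]$, and the cycle condition $\mathcal{C}(v_{i+1})^{X_i}$ becomes $\mathcal{C}(v_1)^{\emptyset}$, which is exactly the set $\mathcal{C}(v)$ of all cycles through $v$, since excluding the empty set of vertices excludes nothing. The deleted subgraphs simplify correspondingly: $G - X_{i+1} - v$ becomes $G - X_1 - u$ once the bound summation variable is renamed to $u$ to avoid a clash with the fixed vertex $v = v_1$, while $G - X_i - V(C)$ becomes $G - V(C)$ because $X_0 = \emptyset$. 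Substituting all of these into the formula of Theorem \ref{vertices}, and noting $x^{k}\phi(G - X_{k}) = x\,\phi(G - X_1)$, yields
\begin{equation*}
\phi(G) = x\,\phi(G - X_1) - \Big[\sum_{u \in N[v]} \phi(G - X_1 - u) + 2\sum_{C \in \mathcal{C}(v)} \phi(G - V(C))\Big],
\end{equation*}
which is the asserted identity after distributing the minus sign over the bracket.

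The main obstacle here is not analytic but notational: one must carefully reconcile the dummy variable $v$ used inside the summation of Theorem \ref{vertices} with the fixed vertex $v = v_1$ of the corollary, and confirm the identification $\mathcal{C}(v_1)^{\emptyset} = \mathcal{C}(v)$. As a final check I would compare the resulting formula against the one-vertex deletion formula of \cite{sch} to verify that the signs and the factor of $2$ attached to the cycle sum agree; this comparison simultaneously corroborates Theorem \ref{vertices} in its simplest instance.
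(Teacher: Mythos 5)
Your proposal is correct and matches the paper's intent exactly: the paper states this corollary without proof, precisely because it is the immediate $k=1$ specialization of Theorem \ref{vertices}, which is the computation you carry out (including the correct identifications $X_0=\emptyset$, $N[v_1]-X_0=N[v]$, and $\mathcal{C}(v_1)^{\emptyset}=\mathcal{C}(v)$, and the renaming of the clashing dummy variable to $u$). No further comment is needed.
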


    \noindent The following theorem states another relationship between the characteristic polynomials of graphs $G$ and $G'$. This time the theorem is useful when the characteristic polynomial of $G$ is known, and we need to compute the characteristic polynomial of $G'$. In the theorem, to obtain the characteristic polynomial of $G'$, we keep on removing loops from $G$, one by one, until we get $G'$.

    \begin{theorem}\label{loops-}
    	If\/ $l_{1},l_{2}, \dots, l_{k}$ represent loops at vertices $v_{1}, v_{2}, \dots, v_{k}$, respectively, in a graph $G$, then the relationship between the characteristic polynomials of $G$ and $G'=G-\{l_{1},l_{2}, \dots, l_{k}\}$ is given by the following:
    	\begin{equation*}
    		\phi(G')= \phi(G) + \sum_{i=1}^{k}\phi(G - \{l_{1}, l_{2}, \dots, l_{i-1}\}- v_{i}).
    	\end{equation*}
    \end{theorem}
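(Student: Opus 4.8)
The plan is to peel off the loops one at a time and telescope, using Corollary~\ref{G-loop} as the single-loop engine. Introduce the intermediate graphs $G_{0}=G$ and $G_{i}=G-\{l_{1},l_{2},\dots,l_{i}\}$ for $1\le i\le k$, so that $G_{k}=G'$. The key observation is that deleting the loop $l_{i}$ from $G_{i-1}$ yields exactly $G_{i}$, i.e.\ $G_{i-1}-l_{i}=G_{i}$, while deleting the vertex $v_{i}$ from $G_{i-1}$ gives $G-\{l_{1},\dots,l_{i-1}\}-v_{i}$, which is the $i$-th summand on the right-hand side. This reframing turns the multi-loop statement into a single-loop computation repeated $k$ times.

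First I would apply Corollary~\ref{G-loop} to the graph $G_{i-1}$, whose loop at $v_{i}$ is $l_{i}$. (Here it matters that the corollary, like Theorem~\ref{loops+}, does not require $G_{i-1}$ to have loops only at $v_{i}$: the graphs $G_{i-1}$ may still carry the later loops $l_{i+1},\dots,l_{k}$ as well as loops outside $\mathcal{L}$, and the corollary applies verbatim.) This gives $\phi(G_{i-1})=\phi(G_{i-1}-l_{i})-\phi(G_{i-1}-v_{i})$, which rearranges to the recurrence
\[
\phi(G_{i})=\phi(G_{i-1})+\phi\big(G-\{l_{1},\dots,l_{i-1}\}-v_{i}\big).
\]
Summing this identity over $i=1,\dots,k$ telescopes the terms $\phi(G_{0}),\phi(G_{1}),\dots,\phi(G_{k})$, leaving only $\phi(G_{k})=\phi(G')$ on the left and $\phi(G_{0})=\phi(G)$ plus the accumulated vertex-deletion terms on the right. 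Equivalently, one can phrase this as a straightforward induction on $k$: the base case $k=1$ is precisely Corollary~\ref{G-loop} rewritten, and the inductive step appends one more application of the recurrence.

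I do not expect a genuine obstacle here; the content is entirely in Corollary~\ref{G-loop}, and what remains is bookkeeping. The only point requiring care is the identification $G_{i-1}-l_{i}=G-\{l_{1},\dots,l_{i}\}=G_{i}$ and the fact that the vertex-deletion term $G_{i-1}-v_{i}$ removes the loop $l_{i}$ together with the vertex $v_{i}$, so it coincides with $G-\{l_{1},\dots,l_{i-1}\}-v_{i}$ as written in the statement. Making the order of loop-removal explicit (loops $l_{1},\dots,l_{i-1}$ already gone before $v_{i}$ is deleted) is what pins down each summand unambiguously, and once that is stated the telescoping yields the claimed formula immediately.
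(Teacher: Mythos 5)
Your proposal is correct and matches the paper's own proof in substance: both peel off loops one at a time via Corollary~\ref{G-loop} applied to the intermediate graphs $G-\{l_{1},\dots,l_{i-1}\}$, the paper phrasing this as induction on $k$ while you phrase it as a telescoping sum (which, as you note yourself, is the same argument unrolled). No gap; the bookkeeping identifications you single out are exactly the ones the paper's inductive step relies on.
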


    \noindent Note that we do not assume $G'$ has loops only at the vertices $v_{1}, v_{2}, \dots, v_{k}$.

    \begin{proof}
    	We prove the result using the principle of mathematical induction on the number of loops. Consider the graph $G$ containing loops $l_{1},l_{2}, \dots, l_{k}$ at vertices $v_{1}, v_{2}, \dots, v_{k}$, respectively. For the case $k=1$, on the left-hand side, we have $\phi(G')$, where $G'=G-l_{1}$; and expanding the summation term, we get as right-hand side $\phi(G)+ \phi(G-v_{1})$. We need to prove that $\phi(G-l_{1})$ is equal to $\phi(G)+\phi(G-v_{1})$, which is true by Corollary \ref{G-loop}. This case describes the relationship between the characteristic polynomials of a graph $G$ and a graph obtained upon removing one loop from $G$.
    	Let us assume the result is true for $k=j-1$; we have
    	\begin{equation*}
    		\phi(G'_{j-1}) = \phi(G) + \sum_{i=1}^{j-1} \phi(G-\{l_{1}, l_{2}, \dots, l_{i-1}\}- v_{i}),
    	\end{equation*}
    	where $G'_{j-1} = G-\{l_{1}, l_{2}, \dots, l_{j-1}\}$. We prove the result for $k=j$. Consider the graph $G'_{j}$, which is $G-\{l_{1}, l_{2}, \dots, l_{j}\}$. It can be written in terms of $G'_{j-1}$ as follows:
    	\begin{equation*}
    		G'_{j}= G-\{l_{1}, l_{2}, \dots, l_{j-1}\}-l_{j}= G'_{j-1}- l_{j}.
    	\end{equation*}
    	Hence, we can obtain the graph $G'_{j}$ from the graph $G'_{j-1}$ by removing the loop $l_{j}$. So we can apply the case where one loop is being removed. We have 
    	\begin{align*}
    		\phi(G'_{j}) &= \phi(G'_{j-1}-l_{j})\\
    		&= \phi(G'_{j-1})+\phi(G'_{j-1} -v_{j}).
    	    \intertext{Using the case $k=j-1$, we get}
    		\phi(G'_{j}) &= \phi(G) + \sum_{i=1}^{j-1} \phi(G-\{l_{1}, l_{2}, \dots, l_{i-1}\}- v_{i})+\phi(G'_{j-1} -v_{j})\\
    		&= \phi(G) + \sum_{i=1}^{j-1} \phi(G-\{l_{1}, l_{2}, \dots, l_{i-1}\}- v_{i})+\phi(G-\{l_{1}, l_{2}, \dots, l_{j-1}\} -v_{j})\\
    		&= \phi(G) + \sum_{i=1}^{j} \phi(G-\{l_{1}, l_{2}, \dots, l_{i-1}\}- v_{i}).
    	\end{align*}
    	Hence, the result is true for $k=j$. By the principle of mathematical induction, the result is true for any $k$.
    \end{proof}

    \begin{example}
    	We wish to find the characteristic polynomial of the graph $G$ shown in Figure \ref{Exam}.
    	\begin{figure}[H]
    		\centering
    		\includegraphics[width=1.5in,height=1.2in]{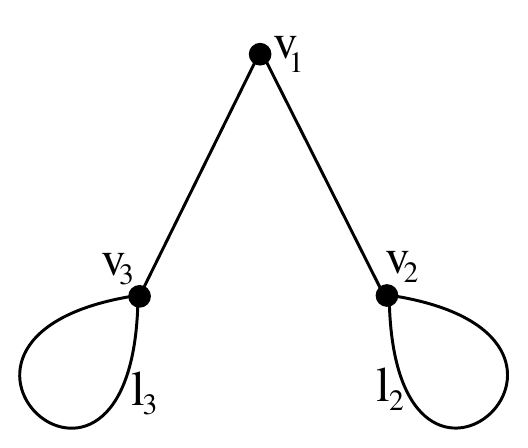}
    		\caption{A graph $G$.}
    		\label{Exam}
    	\end{figure}
        \noindent Using Theorem \ref{loops+}, the characteristic polynomial of $G$ satisfies
        \begin{align*}
        \phi(G)&= \phi(G') - \phi(G'-v_{2}) -\phi(G'-v_{3}) + \phi(G'- \{v_{2}, v_{3}\})\\
        &= \phi(P_{3}) - \phi(P_{2}) - \phi(P_{2}) + \phi(P_{1})\\
        &= \phi(P_{3}) - 2\phi(P_{2}) + \phi(P_{1})\\
        &= x^{3} - 2x - 2(x^{2}-1) + x\\
        &= x^{3} - 2x^{2} - x + 2,
        \end{align*}
        where $P_{n}$ denotes the path graph on $n$ vertices.
    \end{example}

    \begin{example}
    	We wish to find the characteristic polynomial of the graph $G'$ obtained upon removing loops from the graph $G$ shown in Figure \ref{Exam}. Using Theorem \ref{loops-}, the characteristic polynomial of $G'$ satisfies
        \begin{align*}
        \phi(G')&= \phi(G) + \phi(G-v_{2}) + \phi(G-l_{2}-v_{3})\\
        &= x^{3} - 2x^{2} - x + 2 + x^{2} - x - 1 + x^{2} - 1\\
        &= x^{3} - 2x.
        \end{align*}
    \end{example}


\section{Unitary Addition Cayley Graph}

    For a positive integer $n$, the unitary addition Cayley graph $G_{n}$ is a simple graph whose vertex set is $\mathbb{Z}_{n}= \{0, 1, \dots, n-1\}$, the ring of integers modulo $n$ and in which two vertices $v_{1}$ and $v_{2}$ are adjacent if and only if  $v_{1}+v_{2}\in U(n)$, where $U(n)$ denotes the set of units of the ring $\mathbb{Z}_{n}$. By relabeling the vertices as $v_{i}=i-1$, the adjacency matrix associated with $G_{n}$ is an $n\times n$ matrix $A(G_{n})=[a_{ij}]$ whose rows (and columns) correspond to integers $0,1,\dots, n-1$ and which is such that $a_{ii}=0$ and for $i\neq j$,
    \begin{equation*}
    	a_{ij}= \begin{cases}
    	1 & \text{if } (i-1)+(j-1) \in U(n), \\
    	0 & \text{if } (i-1)+(j-1) \notin U(n).
    	\end{cases}
    \end{equation*}
    Consider an anti-circulant matrix $A_{n}$ of order $n$ with first row $a_{0}, a_{1}, \dots, a_{n-1}$:
    \begin{equation*}\label{eq 15}
    A_{n}= \begin{bmatrix}
    a_{0} & a_{1} & a_{2} & \cdots & a_{n-1} \\
    a_{1} & a_{2} & a_{3} & \cdots & a_{0} \\
    \vdots & \vdots & \vdots & \ddots & \vdots \\
    a_{n-1} & a_{0} & a_{1} & \cdots & a_{n-2}
    \end{bmatrix}
    \end{equation*}
    Note that, the $(i,j)$ entry of $A_{n}$ is $a_{i-1+j-1\pmod {n}}$. In \cite{amot}, the authors gave an explicit formula for the eigenvalues of an anti-circulant matrix using the entries $a_{0}, a_{1}, \dots, a_{n-1}$. Define a polynomial $p_{n}(x)$ as:
    \begin{equation*}
    	p_{n}(x)= \sum_{j=0}^{n-1} a_{j}x^{j}.
    \end{equation*}
    For $r=0,1,\dots,\lfloor\frac{n}{2}\rfloor$, define $\lambda_{r}= p_{n}(w^{r})$, where $w=e^{2\pi\iota/n}$, a complex primitive $n$-th root of unity.
    The eigenvalues of $A_{n}$ are as follows (cf. \cite[Theorem 3.6]{amot}):
    \begin{equation*}
    \begin{cases}
    	\lambda_{0}, \lambda_{n/2}, \pm|\lambda_{r}| & \text{if $n$ is even},\\
    	\lambda_{0}, \pm |\lambda_{r}| & \text{if $n$ is odd},
    \end{cases}
    \end{equation*}
    where $r=1,2, \dots,\lfloor\frac{n-1}{2}\rfloor$ and $\lambda_{n/2}=\sum_{j=0}^{n-1}(-1)^{j}a_{j}$.
     
    \noindent As a special case, take
    \begin{equation*}
    a_{j}= \begin{cases}
    1 & \text{if } \gcd(j,n)=1, \\
    0 & \text{if } \gcd(j,n) > 1,
    \end{cases}
    \end{equation*}
    where $\gcd(a,b)$ denotes the greatest common divisor of integers $a$ and $b$. For $r=0,1, \dots,\lfloor\frac{n}{2}\rfloor$, the expression for eigenvalue $\lambda_{r}$ becomes
    \begin{equation*}
    \lambda_{r}= \sum_{\substack{0\leq j\leq n-1\\\gcd(j,n)=1}} w^{rj}.
    \end{equation*}
    In particular, $\lambda_{0}= \phi(n)$, where here $\phi$ denotes Euler's totient function. Also, for even values of $n$, since the set of units contains only odd integers, $\lambda_{n/2}$ becomes $-\phi(n)$.
    
    \noindent Let $X(A_{n})$ be the graph associated with the matrix $A_{n}$.
    
    \begin{theorem}\label{antichar}
    	The characteristic polynomial of $X(A_{n})$ is given by:
    	\begin{equation*}
    	\phi(X(A_{n}))=\begin{cases}
    	\big(x^{2}-\phi(n)^{2}\big) \prod_{r=1}^{\lfloor\frac{n-1}{2}\rfloor}(x^{2}-|\lambda_{r}|^{2}) & \text{if $n$ is even},\\[6pt]
    	\big(x-\phi(n)\big)\prod_{r=1}^{\lfloor\frac{n-1}{2}\rfloor}(x^{2}-|\lambda_{r}|^{2}) & \text{if $n$ is odd}.
    	\end{cases}
    	\end{equation*}
    \end{theorem}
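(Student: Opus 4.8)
The plan is to read the characteristic polynomial off directly from the spectrum of $A_{n}$ recorded in the paragraphs preceding the theorem, so that the work reduces to bookkeeping and the pairing of eigenvalues. First I would note that, by the definition of $X(A_{n})$ as the graph (possibly carrying loops, since the diagonal entry $a_{2(i-1)\bmod n}$ of $A_{n}$ need not vanish) whose adjacency matrix is exactly $A_{n}$, the characteristic polynomial satisfies $\phi(X(A_{n})) = \det(xI - A_{n}) = \prod_{\mu}(x-\mu)$, the product running over all $n$ eigenvalues $\mu$ of $A_{n}$ counted with multiplicity. This is where the loop-oriented viewpoint of the earlier sections becomes relevant, since $X(A_{n})$ genuinely differs from the simple graph $G_{n}$ precisely on the diagonal.

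Next I would invoke the eigenvalue list quoted from \cite[Theorem 3.6]{amot}, together with the two special evaluations established just above the theorem, namely $\lambda_{0} = \phi(n)$ and, when $n$ is even, $\lambda_{n/2} = -\phi(n)$. The remaining eigenvalues occur in the real pairs $\pm|\lambda_{r}|$ for $r = 1, 2, \dots, \lfloor\tfrac{n-1}{2}\rfloor$. Each such pair contributes the quadratic factor $(x-|\lambda_{r}|)(x+|\lambda_{r}|) = x^{2} - |\lambda_{r}|^{2}$, which produces the product $\prod_{r=1}^{\lfloor(n-1)/2\rfloor}(x^{2}-|\lambda_{r}|^{2})$ common to both cases of the statement.

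The two cases then differ only in how the leftover eigenvalues are packaged. For $n$ odd the sole leftover is $\lambda_{0} = \phi(n)$, contributing the single linear factor $x-\phi(n)$; for $n$ even the two leftovers $\lambda_{0} = \phi(n)$ and $\lambda_{n/2} = -\phi(n)$ combine as $(x-\phi(n))(x+\phi(n)) = x^{2}-\phi(n)^{2}$. Multiplying either prefactor by the common product over $r$ yields exactly the two displayed formulas.

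Finally I would run a dimension check to confirm that no eigenvalue is omitted or double-counted, since the only point needing genuine care is the matching between the index range $1 \le r \le \lfloor\tfrac{n-1}{2}\rfloor$ and the parity-dependent presence of $\lambda_{n/2}$. For $n$ even the pairs supply $2(\tfrac{n}{2}-1) = n-2$ eigenvalues and $\lambda_{0},\lambda_{n/2}$ supply two more, totaling $n$; for $n$ odd the pairs supply $2\cdot\tfrac{n-1}{2} = n-1$ and $\lambda_{0}$ supplies one, again totaling $n$. Since the degree of each displayed polynomial is correspondingly $n$, the factorization is complete, and everything else is the routine expansion of a monic product over a known spectrum, so I anticipate no substantial obstacle beyond this parity bookkeeping.
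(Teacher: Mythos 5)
Your proposal is correct and takes essentially the same approach as the paper: the paper offers no separate proof of this theorem, treating it as an immediate consequence of the spectrum of $A_{n}$ quoted from \cite[Theorem 3.6]{amot} together with the evaluations $\lambda_{0}=\phi(n)$ and $\lambda_{n/2}=-\phi(n)$ established in the paragraphs just before the statement. Your pairing of $\pm|\lambda_{r}|$ into the quadratic factors $x^{2}-|\lambda_{r}|^{2}$ and the parity-dependent count of eigenvalues are exactly the bookkeeping the paper leaves implicit.
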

    
    \noindent For $i=1,2,\dots, n$, consider the $(i,i)$ diagonal entry $a_{i-1+i-1\pmod {n}}$ of $A_{n}$. This entry is $1$ if and only if $\gcd(2i-2,n)=1$. Therefore, the graph $X({A_{n}})$ contains loops. The following theorem connects the unitary addition Cayley graph $G_{n}$ with the graph $X(A_{n})$.
    
    \begin{theorem}\label{iso}
    	Let $n$ be a positive integer. 
	
	For even values of $n$, the unitary addition Cayley graph $G_{n}$ is isomorphic to $X(A_{n})$. 
	
	For odd values of $n$, $G_{n}$ is isomorphic to the graph obtained from $X(A_{n})$ by removing all loops. The loops in $X(A_n)$ are at the vertices $v_{i}$ for $i-1 \in U(n)$.
    \end{theorem}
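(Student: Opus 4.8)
The plan is to compare the matrices $A(G_n)$ and $A_n$ directly, since both graphs sit on the same vertex set $\{v_1,\dots,v_n\}$ under the correspondence $v_i \leftrightarrow i$ (with $v_i$ carrying the value $i-1 \in \mathbb{Z}_n$). First I would check that the off-diagonal entries coincide for every $n$. For $i \neq j$, the $(i,j)$ entry of $A_n$ is $a_{(i-1)+(j-1)\pmod n}$, which by the definition of the sequence $(a_j)$ equals $1$ precisely when $\gcd\big((i-1)+(j-1),\,n\big)=1$, i.e.\ when $(i-1)+(j-1)\in U(n)$. This is exactly the adjacency rule for $G_n$, so the two matrices agree off the diagonal irrespective of the parity of $n$.

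The substance of the argument is the diagonal. The $(i,i)$ entry of $A_n$ is $a_{2(i-1)\pmod n}$, equal to $1$ if and only if $\gcd\big(2(i-1),\,n\big)=1$. I would split on the parity of $n$. When $n$ is even, both $2(i-1)$ and $n$ are even, so $2 \mid \gcd\big(2(i-1),n\big)$ and the entry is $0$ for every $i$; hence $A_n$ has zero diagonal and $A_n = A(G_n)$, giving $X(A_n)=G_n$ as labelled graphs, and in particular an isomorphism.

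When $n$ is odd, the key number-theoretic observation is that $\gcd(2,n)=1$, whence $\gcd\big(2(i-1),n\big)=\gcd(i-1,n)$. Thus the diagonal entry is $1$ exactly when $i-1\in U(n)$, which locates the loops of $X(A_n)$ at the vertices $v_i$ with $i-1\in U(n)$, as claimed. Deleting these loops leaves a matrix with zero diagonal whose off-diagonal part already matches $A(G_n)$; hence the loopless graph obtained from $X(A_n)$ is $G_n$.

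I expect the only real obstacle to be making the vertex correspondence and the reductions modulo $n$ precise---in particular, verifying $\gcd(k \bmod n,\,n)=\gcd(k,n)$ so that the adjacency condition for $G_n$ (phrased for sums in $\mathbb{Z}_n$) lines up with the subscript arithmetic defining $A_n$. Once that bookkeeping is settled, the proof reduces to the two parity cases above, and the entire nontrivial content is the single identity $\gcd\big(2(i-1),n\big)=\gcd(i-1,n)$ for odd $n$ together with the parity divisibility $2 \mid \gcd\big(2(i-1),n\big)$ for even $n$.
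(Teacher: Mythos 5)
Your proposal is correct and follows essentially the same route as the paper: an entrywise comparison of $A(G_n)$ with $A_n$, matching the off-diagonal entries for all $n$ and then splitting on the parity of $n$ to analyze the diagonal entries $a_{2(i-1) \bmod n}$. You even supply slightly more detail than the paper (the identity $\gcd\big(2(i-1),n\big)=\gcd(i-1,n)$ for odd $n$ and the divisibility argument for even $n$), but the underlying argument is identical.
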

    \begin{proof}
    	We prove the result by comparing the adjacency matrices for both graphs. For $i\neq j$, irrespective of $n$, 
    	\begin{align*}
    		[a_{ij}]_{A(G_{n})} = 1 &\iff i-1+j-1 \in U(n)\\
    		&\iff \gcd (i-1+j-1, n)=1\\
    		&\iff a_{i-1+j-1\pmod {n}}=1\\
    		&\iff [a_{ij}]_{A_{n}}=1.
    	\end{align*}
    	For $i=j$, $a_{ij}=0$ in $A(G_{n})$. However, $a_{ij}=1$ in $A_{n}$ if and only if $\gcd(2i-2, n)=1$. For even values of $n$, $\gcd(2i-2, n)\neq 1$. Hence, $a_{ij}=0$ in $A_{n}$. Therefore, the unitary addition Cayley graph $G_{n}$ is isomorphic to $X(A_{n})$. For odd values of $n$, $\gcd(2i-2, n)=1$ if and only if $i-1\in U(n)$. Since these entries represent loops, therefore $G_{n}$ is isomorphic to the graph obtained from $X(A_{n})$ by removing all loops.
    \end{proof}
    
    \noindent Finding the exact eigenvalues of unitary addition Cayley graphs is still an open problem. The authors in \cite{palc} tried to obtain bounds on these values. With the help of the above result and the results in previous section, the characteristic polynomial of an unitary addition Cayley graph can be computed, and hence, one can find the eigenvalues as zeros of this polynomial.

    \begin{theorem}
    	For an even positive integer $n$, the characteristic polynomial of $G_{n}$ is given by the following:
    	\begin{equation*}
    	\phi(G_{n})= \big(x^{2}-\phi(n)^{2}\big)\prod_{r=1}^{\lfloor\frac{n-1}{2}\rfloor}(x^{2}-|\lambda_{r}^{2}).
    	\end{equation*}
    	In particular, the eigenvalues of $G_{n}$ are $\pm\phi(n)$ and $\pm|\lambda_{r}|$.
    \end{theorem}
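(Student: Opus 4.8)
The plan is to combine the two theorems from Section~4 in a direct and essentially mechanical way. By Theorem~\ref{iso}, for even $n$ the unitary addition Cayley graph $G_{n}$ is isomorphic to $X(A_{n})$, so their adjacency matrices are equal up to simultaneous permutation of rows and columns and hence have the same characteristic polynomial. Consequently $\phi(G_{n}) = \phi(X(A_{n}))$, and the formula for the latter is already established in Theorem~\ref{antichar} for the even case. First I would invoke Theorem~\ref{iso} to assert $\phi(G_{n}) = \phi(X(A_{n}))$, then substitute the even-$n$ branch of Theorem~\ref{antichar} to obtain
\begin{equation*}
\phi(G_{n}) = \big(x^{2}-\phi(n)^{2}\big)\prod_{r=1}^{\lfloor\frac{n-1}{2}\rfloor}(x^{2}-|\lambda_{r}|^{2}).
\end{equation*}
This gives the displayed characteristic polynomial immediately, with no loop-removal machinery needed, precisely because evenness forces every diagonal entry $a_{2i-2 \bmod n}$ of $A_{n}$ to vanish (units are odd), so $X(A_{n})$ is already loopless.

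Next I would read off the eigenvalues as the roots of this polynomial. The factor $x^{2}-\phi(n)^{2}$ contributes the roots $\pm\phi(n)$, corresponding to $\lambda_{0} = \phi(n)$ and $\lambda_{n/2} = -\phi(n)$, the latter value being exactly the one computed in the discussion preceding Theorem~\ref{antichar} for even $n$. Each factor $x^{2}-|\lambda_{r}|^{2}$ for $r = 1,\dots,\lfloor\frac{n-1}{2}\rfloor$ contributes the pair $\pm|\lambda_{r}|$. Collecting these yields that the eigenvalues of $G_{n}$ are $\pm\phi(n)$ together with $\pm|\lambda_{r}|$ for $1 \le r \le \lfloor\frac{n-1}{2}\rfloor$, which is the ``in particular'' assertion.

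I do not anticipate a genuine obstacle here: the theorem is a corollary of the two preceding results, and the only point requiring a word of care is the invariance of the characteristic polynomial under the graph isomorphism of Theorem~\ref{iso}. Since isomorphic graphs have similar adjacency matrices (conjugate by a permutation matrix) and similarity preserves the characteristic polynomial, this step is routine and can be stated in one line. If anything deserves emphasis, it is the remark that the even case is the clean one exactly because no loops arise, so that $\phi(G_{n})$ inherits the anti-circulant factorization verbatim; the odd case would instead require the loop-removal results of Section~3 applied to Theorem~\ref{antichar}, but that is outside the scope of the present even-$n$ statement.
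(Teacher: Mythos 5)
Your proposal is correct and is exactly the argument the paper intends: the paper states this theorem without an explicit proof, treating it as an immediate consequence of Theorem \ref{iso} (for even $n$, $G_{n}\cong X(A_{n})$, which is loopless since units modulo an even $n$ are odd) combined with the even-$n$ case of Theorem \ref{antichar}. Your version also silently corrects the typo in the paper's displayed formula, which should read $(x^{2}-|\lambda_{r}|^{2})$ rather than $(x^{2}-|\lambda_{r}^{2})$.
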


    \begin{theorem}
    	For an odd positive integer $n$, the characteristic polynomial of $G_{n}$ satisfies
    	\begin{equation*}
    	\phi(G_{n})= \big(x-\phi(n)\big)\prod_{r=1}^{\lfloor\frac{n-1}{2}\rfloor}(x^{2}-\lambda_{r}^{2}) + \sum_{i\in U(n)} \phi\big(X(A_{n})-\sum_{\substack{j\in U(n)\colon\\ j<i}} l_{j}-i\big),
    	\end{equation*}
    	where $l_{j}$ represents a loop at vertex $j$.
    \end{theorem}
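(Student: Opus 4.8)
The plan is to realize $\phi(G_n)$ as the characteristic polynomial of the loopless graph obtained from $X(A_n)$ and then apply the loop-removal formula of Theorem~\ref{loops-}. By Theorem~\ref{iso}, for odd $n$ the graph $G_n$ is isomorphic to $X(A_n)$ with all of its loops deleted, and those loops sit precisely at the vertices $i \in U(n)$. Hence I would set $G = X(A_n)$ and take $\mathcal{L} = U(n)$ to be its set of looped vertices, so that $G - \{\text{all loops}\}$ is genuinely loopless and isomorphic to $G_n$, with $|\mathcal{L}| = \phi(n)$.

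Next I would invoke Theorem~\ref{loops-} applied to $G = X(A_n)$, with the loops $l_j$ for $j \in U(n)$ listed in increasing order of $j$. The theorem then yields
\[
\phi(G_n) = \phi\big(X(A_n)\big) + \sum_{i \in U(n)} \phi\Big(X(A_n) - \sum_{\substack{j \in U(n)\colon\\ j < i}} l_j - i\Big),
\]
where the inner argument deletes exactly the loops at the vertices of $U(n)$ preceding $i$ together with the vertex $i$ itself. This is precisely the summand appearing in the statement, so the only thing to verify here is that the chosen increasing ordering of $U(n)$ matches, term by term, the ``remove $l_1,\dots,l_{i-1}$, then delete $v_i$'' pattern of Theorem~\ref{loops-}.

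Finally I would substitute the closed form for $\phi(X(A_n))$ supplied by Theorem~\ref{antichar} in the odd case, namely $\big(x-\phi(n)\big)\prod_{r=1}^{\lfloor(n-1)/2\rfloor}(x^2 - |\lambda_r|^2)$. Since for these weights $\lambda_r = \sum_{\gcd(j,n)=1} w^{rj}$ is a Ramanujan sum and hence a real integer, we have $|\lambda_r|^2 = \lambda_r^2$, and the leading term becomes exactly the product $\big(x-\phi(n)\big)\prod_{r=1}^{\lfloor(n-1)/2\rfloor}(x^2 - \lambda_r^2)$ in the statement.

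I do not anticipate a genuine obstacle: the result is essentially the composition of the structural identification (Theorem~\ref{iso}), the spectral formula (Theorem~\ref{antichar}), and the loop-removal recurrence (Theorem~\ref{loops-}). The points demanding care are purely bookkeeping: confirming that deleting the loops $l_j$ with $j<i$ and then the vertex $i$ agrees with the stepwise procedure of Theorem~\ref{loops-}, and observing that $\phi(X(A_n))$ has real coefficients so that $|\lambda_r|^2$ may be rewritten as $\lambda_r^2$.
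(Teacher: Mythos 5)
Your proposal is correct and follows essentially the same route as the paper: identify $G_n$ with $X(A_n)$ minus its loops via Theorem~\ref{iso}, apply the loop-removal recurrence of Theorem~\ref{loops-} with the units taken in increasing order, and substitute the closed form from Theorem~\ref{antichar}. Your extra remark that $\lambda_r$ is a real Ramanujan sum, so $|\lambda_r|^2 = \lambda_r^2$, is a bookkeeping point the paper passes over silently, and it is a worthwhile addition.
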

    \begin{proof}
    	From Theorem \ref{iso}, the unitary addition Cayley graph $G_{n}$ is isomorphic to the graph obtained from $X(A_{n})$ by removing the loop at vertex $i$ for all $i$ such that $i\in U(n)$. Now the result follows from Theorem \ref{loops-} and Theorem \ref{antichar} by taking the units as vertices in increasing order.
    \end{proof}


    \section{Acknowledgments}

    The first author's work is supported by the Research Grant from DST [MTR/2018/000607] under Mathematical Research Impact Centric Support(MATRICS) for a period of 3-years (2019--2022). The second author is thankful to the University Grant Commission (UGC) for providing the research grant vide sanctioned letter number 1054(CSIR-UGC NET JUNE 2017).

\bibliographystyle{siam}
\bibliography{ref}

\begin{thebibliography}{10}

\bibitem{ach}
{\sc B.~D. Acharya}, {\em Spectral criterion for cycle balance in networks},
  Journal of Graph Theory, 4 (1980), 1--11.

\bibitem{babh}
{\sc E.~Babaev and R.~Hefferlin}, {\em The concepts of periodicity and
  hyper-periodicity: from atoms to molecules}, in:  D. Rouvray, ed., Concepts in Chemistry: A Contemporary Challenge, Research Studies Press, London, 1996, pp.~24--81.  

\bibitem{bal}
{\sc A.~T. Balaban}, {\em Chemical Applications of Graph Theory}, Academic
  Press, 1976.

\bibitem{beih}
{\sc L.~W. Beineke and F.~Harary}, {\em Consistent graphs with signed points},
  Rivista di matematica per le scienze economiche e sociali, 1 (1978),
  81--88.

\bibitem{bon}
{\sc D.~Bonchev}, {\em Chemical Graph Theory: Introduction and Fundamentals},
  vol.~1, CRC Press, 1991.

\bibitem{dav}
{\sc J.~A. Davis}, {\em Clustering and structural balance in graphs}, Human
  Relations, 20 (1967), 181--187.

\bibitem{dugu}
{\sc J.~Dugundji and I.~Ugi}, {\em An algebraic model of constitutional
  chemistry as a basis for chemical computer programs}, in Computers in
  Chemistry, Springer, 1973, pp.~19--64.

\bibitem{gilh}
{\sc R.~Gillespie and I.~Hargittai}, {\em The VSEPR Model of Molecular Geometry},
  Allyn and Bacon, Newton, Mass.,  1991.

\bibitem{giln}
{\sc R.~Gillespie and R.~Nyholm}, {\em Inorganic stereochemistry}, Quarterly
  Reviews, Chemical Society, 11 (1957), 339--380.

\bibitem{gil}
{\sc R.~J. Gillespie}, {\em Molecular Geometry}, Van Nostrand Reinhold, 1972.

\bibitem{har}
{\sc F.~Harary}, {\em On the notion of balance of a signed graph},
  Michigan Mathematical Journal, 2 (1953), 143--146.

\bibitem{jen}
{\sc W.~B. Jensen}, {\em The Lewis acid-base concepts: an overview}, Krieger Publishing, 1979.

\bibitem{lan}
{\sc I.~Langmuir}, {\em The arrangement of electrons in atoms and molecules}, J. Amer. Chem. Soc., 41 (1919), 868--934; {\em Isomorphism,  isosterism and covalence}, J. Amer. Chem. Soc., 41 (1919), 1543--1559.

\bibitem{lew}
{\sc G.~N. Lewis}, {\em Valence and the Structure of Atoms and
  Molecules},  The Chemical Catalog Company, 1923; Dover
  Publications, New York, 1966.

\bibitem{palc}
{\sc N.~Palanivel and A.~Chithra}, {\em Energy and Laplacian energy of unitary
  addition Cayley graphs}, Filomat, 33 (2019), 3599--3613.

\bibitem{sac}
{\sc H.~Sachs}, {\em Beziehungen zwischen den in einem graphen enthaltenen
  kreisen und seinem charakteristischen polynom}, Publ. Math. Debrecen, 11
  (1964), 119--134.

\bibitem{sam}
{\sc E.~Sampathkumar}, {\em Point signed and line signed graphs}, Nat. Acad.
  Sci. Letters, 7 (1984), 91--93.

\bibitem{sch}
{\sc A.~J. Schwenk}, {\em Computing the characteristic polynomial of a graph},
  in: Graphs and Combinatorics, Springer, 1974, pp.~153--172.

\bibitem{sings}
{\sc D.~Sinha, P.~Garg, and A.~Singh}, {\em Some properties of unitary addition
  cayley graphs}, Notes on Number Theory and Discrete Mathematics, 17 (2011),
  49--59.

\bibitem{spi}
{\sc L.~Spialter}, {\em The atom connectivity matrix (ACM) and its
  characteristic polynomial (ACMCP)}, Journal of Chemical Documentation, 4
  (1964), 261--269.

\bibitem{tri}
{\sc N.~Trinajsti{\'c}}, {\em Computing the characteristic polynomial of a
  conjugated system using the Sachs theorem}, Croatica Chemica Acta, 49 (1977),
  593--633.

\end{thebibliography}

\end{document}